\newcommand{\iid}{\stackrel{\mathrm{i.i.d}}{\sim}}
\title{BAYESIAN STOCHASTIC GRADIENT DESCENT FOR STOCHASTIC OPTIMIZATION WITH STREAMING INPUT DATA
\thanks{A preliminary version of this paper appeared in Proceedings of the 2021 Winter Simulation Conference, 2021.
\funding{This research is funded by the Air Force Office of Scientific Research under Grant FA9550-19-1-0283, Grant FA9550-22-1-0244, and National Science Foundation under Grant DMS2053489.}
}
}
\author{Tianyi Liu \footnotemark[3]
\thanks{H. Milton School of Industrial and Systems Engineering, Georgia Institute of Technology, Atlanta, GA
30332 (\email{tliu341@gatech.edu}, \email{ylin429@gatech.edu}, \email{enlu.zhou@isye.gatech.edu}).}
\and Yifan Lin \footnote[3]{Equal contribution.} \footnotemark[2]
\and Enlu Zhou \footnotemark[2]}
\newcommand*{\addFileDependency}[1]{
  \typeout{(#1)}
  \@addtofilelist{#1}
  \IfFileExists{#1}{}{\typeout{No file #1.}}
}
\begin{document}

\maketitle

\begin{abstract}
We consider stochastic optimization under distributional uncertainty, where the unknown distributional parameter is estimated from streaming data that arrive sequentially over time. Moreover, data may depend on the decision of the time when they are generated. For both decision-independent and decision-dependent uncertainties, we propose an approach to jointly estimate the distributional parameter via Bayesian posterior distribution and update the decision by applying stochastic gradient descent on the Bayesian average of the objective function. Our approach converges asymptotically over time and achieves the convergence rates of classical SGD in the decision-independent case. We demonstrate the empirical performance of our approach on both synthetic test problems and a classical newsvendor problem.
\end{abstract}

\begin{keywords}
Bayesian estimation, streaming input data, stochastic gradient descent, endogenous uncertainty
\end{keywords}

\begin{AMS}
90C15
\end{AMS}

\section{Introduction}
\label{sec:intro}
Stochastic optimization is a mathematical framework that models decision making under uncertainty. It usually assumes that the decision maker has full knowledge about the underlying uncertainty through a known probability distribution and minimizes (or maximizes) a functional of the cost (or reward) function \cite{shapiro2021lectures}. However, the probability distribution of the randomness in the system is rarely known in practice and is often estimated from historic data. The impact of the estimation accuracy and the subsequent distributional uncertainty have been widely studied in the literature. For example, \cite{bonnans2013perturbation} and \cite{rachev2002quantitative} conduct perturbation analysis of the stochastic optimization problems and quantify the sensitivity of the optimal value (and/or solution) to the probability distribution. One popular approach to addressing this distributional uncertainty in stochastic optimization is distributionally robust optimization (DRO) (e.g. \cite{delage2010distributionally, bertsimas2018data, wiesemann2014distributionally}). The DRO framework assumes that the underlying unknown probability distribution  lies in an ambiguity set of probability distributions and then optimizes the problem with respect to the worst case in the ambiguity set. It has been successfully applied to a broad range of problems in statistics, optimization, and control, such as stochastic programming (e.g. \cite{bayraksan2015data, jiang2016data}), Markov Decision Processes (MDPs) (e.g. \cite{xu2010distributionally, yang2017convex}), stochastic control (e.g. \cite{van2015distributionally, yang2020wasserstein}), and ranking and selection (e.g. \cite{Gao2017, Xiao2018, Xiao2020, Fan2020}). To construct an appropriate ambiguity set that contains the true distribution with a probabilistic guarantee and ensures tractability of the optimization problem, various DRO methods have been developed, such as methods based on moment constraints (e.g., \cite{delage2010distributionally}), $\phi$-divergence (e.g. \cite{ben1987penalty}), and Wasserstein distance (e.g., \cite{esfahani2018data}). In contrast to DRO, \cite{Zhou2015, Wu2018} proposed a Bayesian risk optimization (BRO) framework, with the motivation to use the Bayesian posterior distribution (which encodes the likelihoods of all possibilities) to replace the ambiguity set (which treats every possibility inside the set with equal probability), and further take a risk functional with respect to the posterior distribution to allow more flexible risk attitude.

Nearly all the aforementioned works that focus on stochastic optimization in static setting assume that the input data are given as one fixed batch. However, in many applications, data are often collected over time, and the decision maker often needs to make decisions in an online fashion given all the available data. For example, an inventory manager observes the customer demand in a daily or weekly basis, and adjusts his/her decision accordingly; a robot that searches for an unknown source receives signals from the source over time, and makes its move accordingly (e.g. \cite{Li2022Bayesian}). Such streaming data have only been considered recently in stochastic simulation optimization, e.g.,  \cite{Wu2017}, \cite{Zhou2018}, \cite{Wu2019a}, \cite{Song2019a}. While these recent works consider the streaming input data, their assumption is that the data are generated from an exogenous (decision-independent) distribution and hence are independent and identically distributed (i.i.d.). This assumption restricts their application to many real-world problems where the input data are endogenous (decision-dependent). For example, in live streaming e-commerce, there is usually a rolling banner that counts how many products are left, and customers are more likely to purchase the product that has only a few left since it is more popular. As another example, in the supermarket, tall stacks of a product impact its visibility, which leads more customers to purchase the product \cite{giri1996inventory, balakrishnan2004stack}.

Motivated by these real-world problems where data arrive sequentially and could even depend on the decision, in this work we consider stochastic optimization problems where the underlying distribution is unknown but data from the distribution arrive in batches over time. We assume a parameterized distributional model, and thus the distribution family is known but the true distributional parameter is unknown. It is also interesting to consider a non-parametric setting with a prior of Dirichlet process (see \cite{wang2020nonparametric} for a non-parametric simulation optimization problem setting), though the associated analysis could be much more complicated. At each time stage, our procedure consists of two steps: 1) use the current batch of data to update the Bayesian posterior distribution of the distributional parameter, and 2) take the Bayesian average of the objective function and apply stochastic gradient descent (SGD) on this reformulated objective function. Our proposed approach can be viewed as an online extension of the BRO framework in \cite{Wu2018}: BRO considers a fixed batch of data and only need to solve the fixed BRO formulation; in contrast, we consider the setting where batches of data come in sequentially, and therefore, we update the stage-wise BRO problem every time with the new incoming data; moreover, due to the limited time in each stage, we can only apply a few SGD iterations to solve each stage-wise BRO problem. As a result, the convergence analyses of BRO and our paper are quite different and the results have distinct implications: the convergence of BRO shows that if the fixed batch of data has an infinite size, the BRO formulation recovers the true problem  and BRO solutions are indeed the true optimal solutions;  our convergence analysis shows that even though our algorithm applies SGD iterations to a sequence of estimated (Bayesian-average) problems, but the algorithm still converges to the true (local) optimal solution. Another related work \cite{shapiro2021bayesian} considers the same problem of fixed data batch as \cite{Zhou2015, Wu2018} and uses Bayesian average to estimate the true problem, but it also takes a robust approach with respect to the uncertainty associated with the parametric distributional model.

We consider both cases of exogenous and endogenous input data. In the former case, data follow a fixed distribution that only involves the distributional parameter. In the latter case, the data follow a time-varying distribution depending not only on the distributional parameter but also on the decision at the current time. It is worth noting that due to the correlation and non-stationarity of the decision-dependent data across time stages, the Bayesian estimation with such data is different from the classical Bayesian updating with i.i.d. data, which poses a great challenge to showing the consistency of the Bayesian posterior distribution. We consider the same problem as \cite{Song2019a}, but differ in two key aspects: first, we take a Bayesian approach to estimate the distributional parameter, whereas they estimate by maximum likelihood estimator (MLE) and solve the problem with the plug-in MLE; second, they only consider exogenous (decision-independent) uncertainty. Also note that compared to our preliminary conference version \cite{Liu2021Bayesian}, this paper is a substantial extension in both theoretical analysis and numerical experiments. For the decision-independent uncertainty, we further show the convergence rate of the proposed algorithm. Apart from a synthetic test problem, we also evaluate the performance of the proposed algorithm in a classical newsvendor problem.

Our considered problem is related to online learning (e.g. \cite{bottou1998online, shalev2011online}). Online learning is often formulated as a repeated game: at each round, the learner makes a prediction and receives the true solution (or a cost function), with the goal to minimize the cumulative cost over time. Classical algorithms in online learning such as Follow the Leader (FTL) and its variants, such as Follow the perturbed Leader (FTPL) and Follow the Regularized Leader (FTRL), incorporate the learning process, which takes the information from previous rounds to improve prediction, into the algorithms in order to choose the next action that leads to the lowest cumulative cost. In contrast to the goal of minimizing the cumulative cost, our considered problem aims to find an optimal solution of a stationary objective function in the decision-independent case and a non-stationary objective function in the decision-dependent case, where the non-stationarity is only caused by the decision-dependent uncertainty. Since the online data in our problem is restricted to the randomness in the system that is generated from the (unknown) underlying distribution, it is natural to update our belief of the (unknown) distribution in a Bayesian way. In addition to the distinctive goal in our problem, it is worth noting the key differences between our approach and two closely-related algorithms in online learning. The first one is the online gradient descent algorithm (see \cite{zinkevich2003online, bartlett2008adaptive, duchi2011adaptive}), for which the cost function can vary completely arbitrarily over time, and hence is unlike our SGD algorithm that makes use of the structure of the Bayesian average of the objective function over time. The second one is the Thompson sampling algorithm (see \cite{agrawal2012analysis, Bubeck2015Bandit}), which also assumes a parameterized model and updates the posterior distribution on the parameter in a Bayesian way. However, Thompson sampling makes the decision based on only one sample from the posterior distribution in each round; whereas our algorithms takes the entire posterior distribution into account and solves the Bayesian average of the original (unknown) objective function. Later in the numerical experiments, we show that the Bayesian average provides a better estimate of the original objective function compared to a point estimate. 

As a final note, the endogenous uncertainty has been considered in many fields, including dynamic programming (e.g. \cite{webster2012approximate}), robust optimization (e.g. \cite{nohadani2018optimization, Lappas2018}), and stochastic optimization (e.g. \cite{goel2006class, dupacova2006optimization, ekin2017augmented, hellemo2018decision, noyan2018distributionally, Luo2020, yu2020multistage}), with many applications in inventory control (e.g. \cite{benkherouf2001stochastic, lee2012newsvendor}), healthcare (e.g. \cite{green2013nursevendor}), and so on. However, almost none of the aforementioned work involving decision-dependent uncertainty take into consideration the additional input data. Only until recently, \cite{Juan2020} and \cite{dunner2020stochastic} study the performative prediction problem, which is essentially a stochastic optimization problem with streaming decision-dependent data; however, the goal is to find the so-called performatively stable point (or equilibrium point), which is in general different from the true optimal solution. Along the same line, \cite{Dmitriy2020} also considers static stochastic optimization under decision-dependent uncertainty, and proposes a proximal gradient method and its variants that converge to the performatively stable point under relatively strong assumptions (strong convexity, Lipschitz continuity, etc.). Asymptotic normality and optimality of the stochastic approximation algorithm are further studied in a follow-up work \cite{cutler2022stochastic}. Most recently, \cite{izzo2021learn} and \cite{miller2021outside} redesign the gradient algorithms in \cite{Juan2020} by introducing a gradient correction term, and show the convergence to the true optimal solution. In particular, \cite{izzo2021learn} also considers a parameterized model where the distributional parameter (as a function of the decision variable) can be estimated from streaming input data, and uses finite difference to estimate the gradient of the objective function. An important assumption in their approach is that the estimated distributional parameter has a constant error bound. Different from their approach, we learn the distributional parameter with a Bayesian approach, and show the Bayesian consistency of the posterior distribution that finally leads to the convergence of the SGD algorithm to a stationary point of the original objective function (optimal solution if the problem is convex).

We summarize the contribution of this paper as follows. First, we propose a Bayesian stochastic gradient descent approach to stochastic optimization problem with unknown underlying distribution and with streaming input data that could depend on the decision. This new approach is among the very few works \cite{Wu2018, shapiro2021bayesian, gupta2019near} in the literature that take a Bayesian perspective on approaching distributional uncertainty in stochastic optimization. Second, we show the convergence of our approach in the decision-independent case and decision-dependent case respectively. Under decision-independent uncertainty, our approach achieves the convergence rates of classical non-convex SGD. Third, we show the consistency of the Bayesian posterior distribution with endogenous non-i.i.d. data under mild conditions; this result is applicable to a wide range of  problems involving Bayesian estimation beyond the scope of this paper. Our non-asymptotic analysis of the Bayesian estimate with i.i.d. data is also new and could be potentially useful for analyzing other Bayesian algorithms. 

The rest of the paper is organized as follows. We first propose Bayesian-SGD algorithms for stochastic optimization with decision-independent and decision-dependent streaming input data in \cref{sec: algorithms}. We then analyze the convergence properties of the proposed algorithms for both cases in \cref{sec: convergence}. We verify the theoretical results and demonstrate the performance of our algorithms in the numerical experiments in \cref{sec: experiment}. Finally, we conclude the paper in \cref{sec: conclusions}.

\section{Bayesian SGD algorithms for stochastic optimization with streaming input data}
\label{sec: algorithms}
We consider the following stochastic optimization problems with decision-independent uncertainty and decision-dependent uncertainty, receptively:
\begin{align}\label{eq: obj-independent}
    \min _{x \in \mathcal{X}} H(x):=\mathbb{E}_{f(\cdot;\theta^c)}[h(x, \xi)] ~~~\text{(decision-independent uncertainty)}
\end{align}
\begin{align}\label{eq: obj-dependent}
    \min _{x \in \mathcal{X}} H(x):=\mathbb{E}_{f(\cdot;x,\theta^c)}[h(x, \xi)]~~~~~\text{(decision-dependent uncertainty)}
\end{align}
where $x \in \mathcal{X} \subset \mathbb{R}^d$ is the decision vector, $\xi \in \Xi \subset \mathbb{R}^m$ is a random vector, $h: \mathbb{R}^d \times \mathbb{R}^m \to \mathbb{R}$ is a deterministic function. The expectation is taken with respect to (w.r.t.) the distribution of $\xi$, which is denoted as $f(\cdot;\theta^c)$ in the decision-independent case, and as $f(\cdot;x,\theta^c)$ in the decision-dependent case. The density function $f(\cdot;x,\theta^c)$ takes a general form, where the parameter $\theta^c$ does not depend on $x$. For example, $f(\xi;x,\theta^c)=\theta^c x \exp (-\theta^c x \xi)$ is the density function of the exponential distribution with rate $\theta^c x$. More assumptions on the density function will be discussed in \cref{sec: convergence}. We assume the distribution of $\xi$ belongs to a parameterized family of distributions with parameter set $\Theta\subset \mathbb{R}^l$, and let $\theta^c$ be the true parameter value of the distribution.

In practice, the true distribution $f(\cdot;\theta^c)$, or in other words the true distributional parameter $\theta^c$, is rarely known exactly and usually estimated from data. We consider an online setting where data arrive sequentially in time and decisions are updated at each time stage. It is natural to take a Bayesian approach for sequential estimation of the unknown parameter, since it is computationally convenient and the estimate is guaranteed with strong consistency with i.i.d. data (however, Bayesian consistency with non-i.i.d. data are much more complicated, which we will discuss later in \cref{sec: convergence}). With the Bayesian estimate of the distributional parameter, we apply iterations of the SGD algorithm on the estimated problem to update the decision, because the light computational effort of SGD makes it appealing for the online setting. On a high level, at each time stage $t$, after observing a new batch of data we carry out the following two steps:

\begin{itemize}
\item Update the Bayesian posterior distribution of the parameter with the new data. 
\item Use SGD on the Bayesian average of problem \eqref{eq: obj-independent} or \eqref{eq: obj-dependent} to update the decision. 
\end{itemize}

We now discuss the details of these two steps in the following. Let's first focus on the decision-independent case. Suppose at each time stage $t$ we observe a batch of data $\mathbf{y}_t = \{y_{t,j}, j=1,\ldots,D\}$, where $\{y_{t,j}\}$ are i.i.d. according to $f(\cdot;\theta^c)$ and $D$ is the batch size. By viewing the unknown distributional parameter as a random vector $\theta$ and assuming a prior distribution $\pi_0$ on $\theta$, the posterior distribution of $\theta$ is updated by the Bayes rule as follows:
\begin{align}\label{eq:independent-posterior}
    \pi_t(\theta) = \frac{\pi_{t-1}(\theta)f(\mathbf{y}_t;\theta)}{\int\pi_{t-1}(\theta)f(\mathbf{y}_t;\theta)d\theta} =  \frac{\pi_{t-1}(\theta) \prod_{j=1}^{D} f(y_{t,j};\theta)}{\int\pi_{t-1}(\theta) \prod_{j=1}^{D} f(y_{t,j};\theta)d\theta}.
\end{align}
The objective function \eqref{eq: obj-independent} can be viewed as a function of $\theta$, so we define the following function
$$
H(x,\theta):= \mathbb{E}_{f(\cdot;\theta)}[h(x, \xi)]. 
$$
To estimate the true objective function \eqref{eq: obj-independent}, we consider the Bayesian average of the objective function:
\begin{align}
    \min_{x \in \mathcal{X}} \mathbb{E}_{\pi_{t}}\left[H(x, \theta)\right],
\label{eq: independent}
\end{align} 
where the expectation is taken w.r.t. the posterior distribution $\pi_t$ defined in \eqref{eq:independent-posterior}. Then we apply SGD on \eqref{eq: independent} for $K$ iterations within each time stage, where $K$ is a user choice or limited by the time length of the current stage before the next batch of data come in. The key element in SGD is the stochastic gradient estimator, and an unbiased gradient estimator of the objective function in \eqref{eq: independent} can be computed by the infinitesimal perturbation analysis (IPA, refer to \cite{Fu2008}) as:
\begin{align}
    \nabla_x h(x, \xi),  ~~\xi \sim f(\cdot;\theta)~ \text{and}~ \theta \sim \pi_t.
\label{eq: independent gradient estimator}
\end{align}

Now let's focus on the decision-dependent case. With slight abuse of notations, we use the same notations as in the decision-independent case unless defined otherwise. Unlike the decision-independent case where the data batches are i.i.d. over time from the fixed distribution $f(\cdot;\theta^c)$, in the decision-dependent case data batches $\{\mathbf{y}_t\}_t$ are correlated and differently distributed across time stages, since $\mathbf{y}_t$ depends on the decision $x_t$ which is in turn updated from previous data over time. Regardless of the non-stationarity of the data batches, we still use Bayesian posterior distribution to estimate $\theta$:
\begin{align}\label{eq:dependent-posterior}
    \pi_{t}(\theta) = \frac{\pi_{t-1}(\theta) f\left(\mathbf{y}_{t}; x_{t}, \theta \right)}{\int \pi_{t-1}(\theta) f\left(\mathbf{y}_{t}; x_{t}, \theta \right)d\theta} =  \frac{\pi_{t-1}(\theta) \prod_{j=1}^{D} f(y_{t,j};x_t,\theta)}{\int\pi_{t-1}(\theta) \prod_{j=1}^{D} f(y_{t,j};x_t,\theta)d\theta}.
\end{align}

Due to the nonstationarity of data batches, the consistency of the posterior distribution is a question here; we will characterize the conditions needed for strong consistency of $\pi_t$ in \cref{sec: convergence}. 
The Bayesian average of the objective function is
\begin{align}
    \mathbb{E}_{\pi_{t}}[H(x, \theta)] = \mathbb{E}_{\pi_{t}}\left[\mathbb{E}_{f\left(\cdot; x,\theta \right)} \left[h\left(x, \xi\right)\right]\right].
\label{eq: dependent}
\end{align} 
An unbiased gradient estimator of the objective function \cref{eq: dependent} is 
\begin{align}
    \nabla_{x} h(x, \xi) + h(x, \xi)\frac{\nabla_x\widehat{f}_{t}\left(\xi; x\right)}{ \widehat{f}_{t}\left(\xi ; x\right)},  ~~\xi \sim f(\cdot;\theta)~ \text{and}~ \theta \sim \pi_t, 
\label{eq: dependent gradient estimator}
\end{align}
where $\hat{f}_t(\cdot;x):=\mathbb{E}_{\pi_t}[f(\cdot;x,\theta)]$, $\nabla_x \hat{f}_t(\cdot;x):=\nabla_x \mathbb{E}_{\pi_t}[f(\cdot;x,\theta)]$. The derivation of the gradient estimators \eqref{eq: independent gradient estimator} and \eqref{eq: dependent gradient estimator} will be shown in \cref{sec: convergence}. Informally, \eqref{eq: dependent gradient estimator} is obtained by taking derivative of $h(x,\xi)f(\xi;x,\theta)$ w.r.t. $x$. In the algorithms we assume that the posterior distribution $\pi_t$ and the expectation in $\hat{f}_t(\cdot;x)$ and $\nabla_x \hat{f}_t(\cdot;x)$ can be exactly computed, which is often the case when we choose a conjugate prior distribution for Bayesian updating. For general posterior distributions, we can use general Markov Chain Monte Carlo (MCMC) methods, such as the Langevin algorithm (\cite{ermak1980numerical, durmus2017fast}), to sample from the posterior and use these samples to approximate the expectation. It is worth noting that the first term in \eqref{eq: dependent gradient estimator} is the same as the stochastic gradient estimator \eqref{eq: independent gradient estimator} in the decision-independent case, and the second term is unique here and caused by the dependence of the distribution on the decision $x$. 

The algorithms, named as Bayesian Stochastic Gradient Descent (Bayesian-SGD), for stochastic optimization with decision-independent uncertainty and decision-dependent uncertainty are shown in \cref{algorithm: independent} and \cref{algorithm: dependent}, respectively. Please note that to accelerate algorithm convergence, variants of SGD methods could be used instead of the plain SGD iterations in these algorithms.

\begin{algorithm}
\caption{Bayesian-SGD (decision-independent uncertainty)}
\label{algorithm: independent}
\begin{algorithmic}
\STATE{\textbf{input}: data batch size $D$, number of SGD iterations $K$, step size sequence $\{a_{t,j}, t=1,2,\ldots; j=0,\ldots,K-1\}$, time horizon $T$.}
\STATE{\textbf{initialization}: choose an initial decision $x_1$ and prior distribution $\pi_0(\theta)$.}
\FOR{$t =1: T$}
    \STATE{-A batch of data $y_{t,1},\cdots,y_{t,D} \iid f(\cdot; \theta^c)$ arrives;}
    \STATE{-Posterior Update: compute $\pi_t(\theta)$ according to \eqref{eq:independent-posterior}. }
    \STATE{-Decision Update:
        \begin{itemize}
            \item set $x_{t,0}:=x_{t}$;
            \item for $j=0,\cdots,K-1$, draw sample $\theta_{t,j} \sim \pi_t(\theta)$ and $\xi_{t,j} \sim f(\cdot;\theta_{t,j})$, and carry out SGD iteration:
        \end{itemize}
        \begin{align}\label{eq: SGD_independent}
            x_{t,j+1}:=\operatorname{Proj}_{\mathcal{X}}\left\{x_{t,j}-a_{t,j}\nabla_x h(x_{t,j},\xi_{t,j})\right\},
        \end{align}
        ~~~~~~~~~~~~~~where $\operatorname{Proj}_{\mathcal{X}}$ is a projection operator that projects the iterate  to the  set $\mathcal{X}$. 
        \begin{itemize}
            \item set the updated decision as $x_{t+1}:=x_{t,K}$;
        \end{itemize}
    }
\ENDFOR
\RETURN $x_{T+1}$
\end{algorithmic}
\end{algorithm}
    
\begin{algorithm}
\caption{Bayesian-SGD (decision-dependent uncertainty)}
\label{algorithm: dependent}
\begin{algorithmic}
\STATE{\textbf{input}: data batch size $D$, number of SGD iterations $K$, step size sequence $\{a_{t,j}, t=1,2,\ldots; j=0,\ldots,K-1\}$, time horizon $T$.}
\STATE{\textbf{initialization}: choose an initial decision $x_1$ and prior distribution $\pi_0(\theta)$.}
\FOR{$t=1:T$}  
    \STATE{-A batch of data $y_{t,1},\cdots,y_{t,D} \iid f(\cdot; x_t, \theta^c)$ arrives;}
    \STATE{-Posterior Update: compute $\pi_t(\theta)$ according to \eqref{eq:dependent-posterior}. }
    \STATE{-Decision Update: 
        \begin{itemize}
            \item set $x_{t,0}:=x_{t}$;
            \item for $j=0,\cdots,K-1$, draw sample $\theta_{t,j} \sim \pi_t(\theta)$ and $\xi_{t,j} \sim f(\cdot;x_{t,j},\theta_{t,j})$, and carry out SGD iteration:
        \end{itemize}
        \begin{align}\label{eq: SGD_dependent}
            x_{t,j+1}:=\operatorname{Proj}_{\mathcal{X}}\left\{x_{t,j}-a_{t,j}\left(\nabla_{x} h\left(x_{t,j}, \xi_{t,j}\right)+h\left(x_{t,j}, \xi_{t,j}\right) \frac{\nabla_x\widehat{f}_{t}\left(\xi_{t,j} ; x_{t,j}\right)}{ \widehat{f}_{t}\left(\xi_{t,j} ;x_{t,j}\right)}\right)\right\},
        \end{align}
        ~~~~~~~~~~~~~~where $\operatorname{Proj}_{\mathcal{X}}$ is a projection operator that projects the iterate to the set $\mathcal{X}$. 
        \begin{itemize}
            \item set the updated decision as $x_{t+1}:=x_{t,K}$;
        \end{itemize}
    }
\ENDFOR
\RETURN $x_{T+1}$
\end{algorithmic}
\end{algorithm}

\section{Convergence analysis}
\label{sec: convergence}
In this section, we show asymptotic convergence of Algorithm 2.1 and Algorithm 2.2. Towards this end, we first need to show the consistency of the Bayesian posterior distribution and then show the convergence of SGD when applied to the non-stationary Bayesian average stochastic optimization problems \cref{eq: independent} and
\cref{eq: independent gradient estimator}. In addition, we show the convergence rate in the decision-independent case.

\subsection{Convergence analysis for the decision-independent case}
\label{subsec: independent}
Let's first consider the decision-independent case. The probability space is constructed as follows. Define the Bayesian prior $\pi_0$ on $(\Theta, \mathcal{B}_{\Theta})$, where $\mathcal{B}_{\Theta}$ is the Borel $\sigma$-algebra on $\Theta$. Let $\mathcal{Y} \subset \mathbb{R}^{m}$ denote the data (observation) space. The data $y$ takes value in $\mathcal{Y}$ equipped with a Borel $\sigma$-algebra $\mathcal{B}_{\mathcal{Y}}$ and a probability measure $\left\{\mathbb{P}_{\theta^c}\right\}$, such that $\mathbb{P}_{\theta^c}(y \in A) = \int_{A} f\left(y; \theta^c\right) d y, \forall A \in \mathcal{B}(\mathcal{Y})$. For the sequence $y_1,y_2,\cdots,y_n \iid f(\cdot;\theta^c)$, the probability measure is denoted by $\mathbb{P}_{\theta^c}^{n}$. As for the infinite sequence $\{y_1,y_2,\ldots\}$, the probability measure $\mathbb{P}_{\theta^c}^{\infty}$ can be constructed by Kolmogorov's extension theorem (cf. Theorem A.3.1 in \cite{durrett2019probability}). In the following, w.p.1 (or almost surely) means that the considered property holds with probability one w.r.t. the probability measure $\mathbb{P}_{\theta^c}^\infty$. Finally, let $\mathcal{F}_{t}:=\sigma\left\{\left(y_{\tau} \right), \tau \leq t\right\}$ be the $\sigma$-filtration generated by the data. We have the convergence of the posterior distribution $\{\pi_t\}$ that is updated according to \cref{eq:independent-posterior} under the following assumptions.

\begin{assumption}[\cite{shapiro2021bayesian}, Assumption 3.1] (i) The set $\Theta$ is convex and compact with non-empty interior. (ii) $\ln \pi_0(\theta)$ is bounded on $\Theta$. (iii) $f(\xi|\theta) > 0$ for all $\xi \in \Xi$ and $\theta \in \Theta$. (iv) $f(\xi|\theta)$ is continuous in $\theta \in \Theta$. (v) $\ln f(\xi|\theta), \theta \in \Theta$ is dominated by an integrable (w.r.t. $\xi \sim f(\cdot;\theta^c)$) function. (vi) The data batches are i.i.d. over time from the fixed distribution $f(\cdot;\theta^c)$. 
\label{ass: Alex2022}
\end{assumption}

We refer the readers to \cite{shapiro2021bayesian} for detailed explanations of the above assumptions. The next lemma shows the Bayesian consistency under \cref{ass: Alex2022}, which implies the distributional uncertainty diminishes as $t \to \infty$. 

\begin{definition}[Weak convergence]
A sequence of distributions $\mathbb{P}_n \Rightarrow P$, if and only if $\int g d\mathbb{P}_{n}\rightarrow \int g d \mathbb{P}$ as $n \to \infty$ for all $g$ bounded and continuous. 
\end{definition}

\begin{lemma}[\cite{shapiro2021bayesian}, Lemma 3.2]
Under \cref{ass: Alex2022}, $\pi_t(\theta) \Rightarrow \delta_{\theta^c}(\theta)$ w.p.1, where $\delta_{\theta^c}$ is the Dirac delta function concentrated on the true parameter $\theta^c$.
\label{lemma: Alex2022}
\end{lemma}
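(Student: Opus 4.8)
The plan is to reduce the weak-convergence claim to a concentration statement and then control the posterior through the log-likelihood ratio. By the Portmanteau theorem, $\pi_t \Rightarrow \delta_{\theta^c}$ holds w.p.1 if and only if $\pi_t(U) \to 1$ w.p.1 for every open neighborhood $U \subset \Theta$ of $\theta^c$; equivalently $\pi_t(B) \to 0$ w.p.1 for every closed set $B \subset \Theta$ with $\theta^c \notin B$. So I would fix such a $B$ and let $n = tD$ be the number of observations accumulated through stage $t$. Unrolling the recursion \eqref{eq:independent-posterior} and dividing numerator and denominator by the true likelihood $\prod_{i=1}^n f(y_i;\theta^c)$, I would write
\[
\pi_t(B) = \frac{\int_B \pi_0(\theta)\, e^{\Lambda_n(\theta)}\, d\theta}{\int_\Theta \pi_0(\theta)\, e^{\Lambda_n(\theta)}\, d\theta}, \qquad \Lambda_n(\theta) := \sum_{i=1}^n \ln\frac{f(y_i;\theta)}{f(y_i;\theta^c)}.
\]
The driving quantity is the Kullback--Leibler divergence $K(\theta) := \mathbb{E}_{\theta^c}\big[\ln f(\xi;\theta^c) - \ln f(\xi;\theta)\big] \ge 0$, which is finite and continuous on $\Theta$ by the domination and continuity assumptions (v) and (iv) together with dominated convergence, and which vanishes only at $\theta^c$ by identifiability of the family.

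Second, I would bound the denominator from below. Since (ii) gives $e^{-M} \le \pi_0 \le e^{M}$ for some constant $M$, every open set carries positive prior mass. Fix $\epsilon > 0$ and let $V = \{\theta : K(\theta) < \epsilon\}$, an open neighborhood of $\theta^c$ with $\pi_0(V) > 0$. By the strong law of large numbers, $n^{-1}\Lambda_n(\theta) \to -K(\theta)$ w.p.1 for each fixed $\theta$, so applying Fatou's lemma to $e^{n\epsilon}\int_V \pi_0\, e^{\Lambda_n}\,d\theta$ and using $\epsilon - K(\theta) > 0$ on $V$, I expect to obtain $\liminf_n e^{n\epsilon}\int_\Theta \pi_0\, e^{\Lambda_n}\,d\theta = \infty$ w.p.1; in particular the denominator is eventually at least $e^{-2n\epsilon}$.

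Third, I would bound the numerator from above. Here the natural route is a uniform law of large numbers: under compactness of $\Theta$ (i), continuity and positivity of $\theta \mapsto \ln f(\xi;\theta)$ (iii)--(iv), and the integrable envelope (v), one has $\sup_{\theta\in\Theta}\big| n^{-1}\Lambda_n(\theta) + K(\theta)\big| \to 0$ w.p.1. Since $B$ is closed in the compact set $\Theta$ and $\theta^c \notin B$, continuity and identifiability give $\beta := \inf_{\theta \in B} K(\theta) > 0$. The uniform SLLN then yields $\sup_{\theta\in B} n^{-1}\Lambda_n(\theta) \le -\beta/2$ eventually, whence $\int_B \pi_0\, e^{\Lambda_n}\,d\theta \le e^{M}\,\mathrm{vol}(B)\, e^{-n\beta/2}$. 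Combining with the denominator bound and choosing $\epsilon < \beta/8$ gives $\pi_t(B) \le C\, e^{-n(\beta/2 - 2\epsilon)} \to 0$ w.p.1, which is the desired concentration.

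I expect the main obstacle to be the uniform law of large numbers in the third step: upgrading the pointwise SLLN to a supremum over $\Theta$ requires a covering/equicontinuity argument built on the integrable envelope and the continuity of $\ln f$ in $\theta$ (essentially Wald's conditions for consistency). A secondary technical point is the measure-theoretic bookkeeping in the Fatou step, where the exceptional $\mathbb{P}_{\theta^c}^\infty$-null set must be chosen uniformly enough (via Tonelli) to interchange the almost-sure limit with the integral over $\theta$. Identifiability, needed to ensure $K$ vanishes only at $\theta^c$ and is bounded away from zero on $B$, is the one structural ingredient not stated explicitly among (i)--(vi) and would have to be read off from the parametric model.
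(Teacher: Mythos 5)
This lemma is not proved in the paper at all: it is imported verbatim from \cite{shapiro2021bayesian} (their Lemma 3.2), so there is no in-paper proof to compare against. Judged on its own merits, your sketch is the classical Wald--Schwartz consistency argument and is essentially sound: the Portmanteau reduction to $\pi_t(B)\to 0$ for closed $B\not\ni\theta^c$ (take a countable base of such sets so the null sets can be unioned), the Schwartz-type lower bound on the denominator via prior positivity on the Kullback--Leibler neighborhood $V=\{K(\theta)<\epsilon\}$ together with the pointwise SLLN and Fatou (with the Tonelli step you mention to handle the $\theta$-dependent null sets), and the Wald-type upper bound on the numerator via a uniform SLLN, which is legitimate here because compactness (i), continuity (iv), positivity (iii), and the integrable envelope (v) are exactly Jennrich/Wald's conditions. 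This is also, in substance, how the cited reference argues, so your reconstruction is faithful in structure.

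The one point you flag deserves emphasis, because it is genuine and not merely cosmetic: identifiability is indispensable and is absent from assumptions (i)--(vi) as restated in this paper. Without it the lemma as literally stated is false --- take $f(\xi;\theta)$ constant in $\theta$; then (i)--(vi) all hold, yet $\pi_t=\pi_0$ for every $t$, which does not converge to $\delta_{\theta^c}$. Your proof needs identifiability precisely where you use it: to guarantee $K(\theta)=0$ only at $\theta^c$, hence $\beta=\inf_{\theta\in B}K(\theta)>0$ by continuity of $K$ and compactness of $B$. So either identifiability must be read off from the original Assumption 3.1 of \cite{shapiro2021bayesian} (where a condition of this kind does appear) or it must be added explicitly; your proposal is correct once that hypothesis is in place.
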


We then study the asymptotic behavior of \cref{algorithm: independent} by the ordinary differential equation (ODE) method (please refer to \cite{Kushner2003} for a detailed exposition on the ODE method for stochastic approximation). The main idea is that SGD can be viewed as a noisy discretization of an ODE. Under certain conditions, the noise in SGD averages out asymptotically, such that the SGD iterates converge to the solution trajectory of the ODE. For simplicity, we consider the case where $K=1$ and rewrite the SGD iteration \cref{eq: SGD_independent} as
\begin{equation}\label{eq: SGD_independent_2}
    x_{t+1}=x_{t}-a_{t} \nabla_{x} h\left(x_{t}, \xi_{t}\right) + a_t z_t,
\end{equation}
where $a_t z_t$ is the projection term, i.e., the vector of shortest Euclidean length needed to keep the decision $x_{t+1}$ from leaving the decision space $\mathcal{X}$. We first show that under certain mild conditions, the proposed gradient estimator in \cref{eq: SGD_independent_2} is unbiased. 

\begin{assumption}\label{ass: independent_DCT}
$h(x,\xi)$ is $C^{1}$-smooth in $x$ for all $\xi \in \Xi$, and the map $\xi \to \nabla_x h(x,\xi)$ is $L_h$-Lipschitz continuous for any $x \in \mathcal{X}$. 
\end{assumption}

\cref{ass: independent_DCT} is a commonly used smooth assumption in the stochastic approximation literature (cf. \cite{ghadimi2013stochastic, Dmitriy2020}). An important consequence is that for any probability measure, $\mathbb{E} h(x,\xi)$ is differentiable in $x$ with gradient $\mathbb{E} \nabla_x h(x,\xi)$ (cf. \cite{Dmitriy2020}).

\begin{lemma}\label{lemma: independent_unbiasedness}
Under \cref{ass: independent_DCT}, $\nabla_{x} h(x, \xi)$ with $\xi \sim f(\cdot;\theta)$ and $\theta \sim \pi_t$ is an unbiased gradient estimator of the objective function in \cref{eq: independent}.
\end{lemma}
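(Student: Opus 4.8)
The plan is to recognize the objective in \eqref{eq: independent} as the expectation of $h(x,\xi)$ under a single mixture probability measure on $\xi$, and then to reduce the claim to an exchange of gradient and integral, for which the consequence of \cref{ass: independent_DCT} recorded immediately above is tailor-made. Concretely, the expectation of the proposed estimator \eqref{eq: independent gradient estimator} is
\[
\mathbb{E}_{\theta\sim\pi_t}\,\mathbb{E}_{\xi\sim f(\cdot;\theta)}\!\left[\nabla_x h(x,\xi)\right]
=\int_\Theta\!\left(\int_\Xi \nabla_x h(x,\xi)\,f(\xi;\theta)\,d\xi\right)\pi_t(\theta)\,d\theta,
\]
while the objective is $\mathbb{E}_{\pi_t}[H(x,\theta)]=\int_\Theta\int_\Xi h(x,\xi)\,f(\xi;\theta)\,d\xi\,\pi_t(\theta)\,d\theta$, so unbiasedness amounts to moving $\nabla_x$ inside both integrals.

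First I would introduce the posterior-predictive density $\bar f_t(\xi):=\int_\Theta f(\xi;\theta)\,\pi_t(\theta)\,d\theta$, which is a genuine probability density on $\Xi$ because it is a $\pi_t$-mixture of the densities $f(\cdot;\theta)$. By Tonelli/Fubini (integrating out $\theta$ first), the objective collapses to $\mathbb{E}_{\pi_t}[H(x,\theta)]=\mathbb{E}_{\xi\sim\bar f_t}[h(x,\xi)]$, i.e.\ a single expectation over $\xi$. Applying the consequence of \cref{ass: independent_DCT} to the probability measure $\bar f_t$ then shows that $x\mapsto\mathbb{E}_{\xi\sim\bar f_t}[h(x,\xi)]$ is differentiable with $\nabla_x\mathbb{E}_{\xi\sim\bar f_t}[h(x,\xi)]=\mathbb{E}_{\xi\sim\bar f_t}[\nabla_x h(x,\xi)]$, and undoing the mixture identity on the right-hand side recovers exactly $\mathbb{E}_{\theta\sim\pi_t}\mathbb{E}_{\xi\sim f(\cdot;\theta)}[\nabla_x h(x,\xi)]$. (An equivalent route is to apply that consequence first to each fixed $f(\cdot;\theta)$, giving $\nabla_x H(x,\theta)=\mathbb{E}_{f(\cdot;\theta)}[\nabla_x h(x,\xi)]$, and then interchange $\nabla_x$ with $\mathbb{E}_{\pi_t}$; the mixture formulation folds these two interchanges into one.)

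The only real work lies in the exchange of gradient and integral underlying that consequence, which I would spell out rather than merely cite. For a coordinate direction $e_i$ and small $\delta$, the mean value theorem gives $\delta^{-1}\bigl(h(x+\delta e_i,\xi)-h(x,\xi)\bigr)=\partial_{x_i}h(x+\tau\delta e_i,\xi)$ for some $\tau\in(0,1)$, and passing $\delta\to0$ under $\mathbb{E}_{\bar f_t}$ by dominated convergence requires a $\bar f_t$-integrable envelope for $\nabla_x h(\cdot,\xi)$ on a neighborhood of $x$. This is precisely where the $L_h$-Lipschitz continuity of $\xi\mapsto\nabla_x h(x,\xi)$ enters: it yields $\|\nabla_x h(x,\xi)\|\le\|\nabla_x h(x,\xi_0)\|+L_h\|\xi-\xi_0\|$ for a fixed reference point $\xi_0$, so an integrable dominating function exists once $\xi$ has a finite first moment under $\bar f_t$. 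I expect this domination step to be the main (and essentially the only) obstacle; the rest of the argument is the mixture bookkeeping above.
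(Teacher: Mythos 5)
Your proposal is correct, and at bottom it runs on the same engine as the paper's proof: interchanging $\nabla_x$ with expectation, justified by dominated convergence under the Lipschitz-gradient hypothesis of \cref{ass: independent_DCT}. The packaging, however, differs in two ways worth recording. First, the paper performs two successive interchanges---$\nabla_x$ past $\mathbb{E}_{f(\cdot;\theta)}$ for each fixed $\theta$, then $\nabla_x$ past $\mathbb{E}_{\pi_t}$---each waved through with a bare citation of the DCT; you instead collapse the double expectation into a single expectation against the posterior-predictive density $\bar f_t(\xi)=\int_\Theta f(\xi;\theta)\,\pi_t(\theta)\,d\theta$ via Fubini--Tonelli and invoke the interchange exactly once, which matches verbatim the ``for any probability measure'' consequence the paper records right after \cref{ass: independent_DCT}. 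Second, and more substantively, you actually construct the dominating envelope instead of citing DCT as a black box: the mean value theorem turns difference quotients into gradients at nearby points, and Lipschitz continuity in $\xi$ gives
\begin{equation*}
\|\nabla_x h(x',\xi)\|\;\le\;\|\nabla_x h(x',\xi_0)\|+L_h\|\xi-\xi_0\|,
\end{equation*}
which is an integrable envelope precisely when $\xi$ has a finite first moment under $\bar f_t$. This surfaces a condition the paper never states: its one-line ``justified by DCT'' supplies no dominating function, and the consequence it imports from \cite{Dmitriy2020} is established there for distributions with finite first moment (that work operates in the Wasserstein-1 space). So the moment condition you flag is not a defect of your argument relative to the paper's---it is an implicit hypothesis of both proofs, not derivable from \cref{ass: Alex2022} or \cref{ass: independent_DCT}, and your version has the merit of making it explicit.
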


\begin{proof}
For every fixed $x\in\mathcal{X}$, 
\begin{align*}
    \mathbb{E}_{\pi_{t}}\left[\mathbb{E}_{f(\cdot;\theta)} [\nabla_x h(x, \xi)]\right] & = \mathbb{E}_{\pi_{t}}\left[\nabla_{x} \mathbb{E}_{f(\cdot;\theta)} [h(x, \xi)]\right]\\
    & = \nabla_{x} \mathbb{E}_{\pi_{t}}\left[\mathbb{E}_{f(\cdot;\theta)} [h(x, \xi)]\right],
\end{align*}
where the first equality holds because the gradient $\nabla_x h(x,\xi)$ is Lipschitz continuous, and the interchange between expectation and differentiation is justified by dominated convergence theorem (DCT). Similarly, the second equality above is again justified by DCT. Therefore, the proposed estimator in \cref{eq: SGD_independent_2} is unbiased gradient estimator of the objective function in \cref{eq: independent}.
\end{proof}

\begin{assumption}\label{ass: independent_step}
~
\begin{itemize}
    \item The step size $\{a_t\}$ satisfies $\sum_{t=1}^{\infty} a_{t}^{2}<\infty$, $\sum_{t=1}^{\infty} a_{t}=\infty$, $\lim_{t \to \infty} a_{t} = 0$, $a_t > 0, \forall t > 0$.
    \item The decision space $\mathcal{X} \subset \mathbb{R}^{d}$ is compact and convex.
\end{itemize}
\end{assumption}

The above assumptions on the step size and the compact and convex decision space are often used in SGD (cf. \cite{Kushner2003}). The first assumption essentially requires the step size diminishes to zero not too slow ($\sum_{t=1}^{\infty} a_{t}^{2}<\infty$) nor too fast ($\sum_{t=1}^{\infty} a_{t}=\infty$). For example, we can choose $a_t=\frac{a}{t}$ for some $a>0$. 

Before proceeding to our main convergence result, we introduce the continuous-time interpolations of the decision sequence $\{x_t\}$. Define $t_1=1$ and $t_n=1+\sum_{i=1}^{n-1}a_i, n \geq 2$. For $t \geq 1$, let $N(t)$ be the unique $n$ such that $t_n \leq t < t_{n+1}$. For $t < 1$, set $N(t)=1$. Define the interpolated continuous process $X$ as $X(1) = x_1$ and $X(t) = x_{N(t)}$ for any $t > 1$, and the shifted process as $X^{n}(s) = X(s + t_n)$. 
We then show in the following theorem that \cref{algorithm: independent} converges w.p.1.

\begin{theorem}
Let $\mathcal{D}^d[0,\infty)$ be the space of $\mathbb{R}^d$-valued operators which are right continuous and have left-hand limits for each dimension. Under \cref{ass: Alex2022}, \cref{ass: independent_DCT} and \cref{ass: independent_step}, there exists a process $X^*(\cdot)$ to which the subsequence of $\{X^n(\cdot)\}_n$ converges w.p.1 in the space $\mathcal{D}^d[0,\infty)$, where $X^*(\cdot)$ satisfies the following ODE
\begin{align}\label{ODE_independent}
\dot{X}= -\nabla H(X,\theta^c)+z, ~z\in -\mathcal{C}(X), \quad X(1)=x_1,
\end{align}
where $\mathcal{C}(X)$ is the Clarke's normal cone to $\mathcal{X}$, i.e., for any $x \in \mathcal{X}$, $\mathcal{C}(x) = \{c: c^{T}x \geq c^{T}y, \forall y \in \mathcal{C}\}$. $z$ is the projection term: it is the vector of shortest Euclidean length needed to keep the trajectory of the ODE $X(\cdot)$ from leaving the decision space $\mathcal{X}$. The sequence $\{x_t\}_t$ in \cref{eq: SGD_independent_2} also converges w.p.1 to the limit set of the ODE \cref{ODE_independent}.
\label{thm: independent}
\end{theorem}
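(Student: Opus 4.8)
The plan is to prove \cref{thm: independent} by the ODE method for constrained stochastic approximation (following the framework of \cite{Kushner2003}), treating the Bayesian averaging as a time-varying but asymptotically vanishing perturbation of the ``true'' mean field $-\nabla H(\cdot,\theta^c)$. First I would rewrite \eqref{eq: SGD_independent_2} by decomposing the stochastic gradient into its conditional mean, a bias, and a martingale-difference noise. Conditioning on the filtration generated by the observations and the SGD samples (which fixes $\pi_t$ and $x_t$), \cref{lemma: independent_unbiasedness} gives that the conditional mean of $\nabla_x h(x_t,\xi_t)$ equals $\bar g_t(x_t):=\mathbb{E}_{\pi_t}[\nabla_x H(x_t,\theta)]$. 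Setting $\delta M_t:=\nabla_x h(x_t,\xi_t)-\bar g_t(x_t)$ and $\beta_t:=\bar g_t(x_t)-\nabla H(x_t,\theta^c)$, the recursion becomes
\[
x_{t+1}=x_t-a_t\nabla H(x_t,\theta^c)-a_t\beta_t-a_t\,\delta M_t+a_t z_t,
\]
so that $-\nabla H(x_t,\theta^c)$ is the candidate mean field driving the limiting ODE \eqref{ODE_independent}, while $\beta_t$ and $\delta M_t$ must be shown asymptotically negligible.

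Second, I would verify the standard hypotheses of the ODE method. Boundedness of $\{x_t\}$ is immediate, since $\operatorname{Proj}_{\mathcal{X}}$ keeps every iterate in the compact set $\mathcal{X}$ (\cref{ass: independent_step}), and continuity of the mean field $x\mapsto-\nabla H(x,\theta^c)$ on $\mathcal{X}$ follows from \cref{ass: independent_DCT}. For the noise, $\{\delta M_t\}$ is a martingale-difference sequence whose conditional second moments are finite and uniformly bounded on $\mathcal{X}$ (using the $L_h$-Lipschitz continuity in \cref{ass: independent_DCT} together with integrability of the sampling distribution); combined with $\sum_t a_t^2<\infty$ from \cref{ass: independent_step}, the martingale $\sum_t a_t\,\delta M_t$ converges w.p.1 by the martingale convergence theorem, yielding the Kushner--Clark noise condition $\lim_n\sup_{m\ge n}|\sum_{i=n}^{m}a_i\,\delta M_i|=0$.

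The crux of the argument, and the step I expect to be the main obstacle, is showing $\beta_t\to 0$ w.p.1, since this is where Bayesian consistency enters and where the time-varying estimated objective is reconciled with the fixed limiting ODE. By \cref{lemma: Alex2022}, $\pi_t\Rightarrow\delta_{\theta^c}$ w.p.1, but weak convergence a priori controls only $\int g\,d\pi_t$ for a \emph{fixed} bounded continuous $g$, whereas $\beta_t$ is evaluated at the \emph{moving} iterate $x_t$. To bridge this gap I would exploit compactness: since $\mathcal{X}$ and $\Theta$ are compact and $\nabla_x H(x,\theta)$ is jointly continuous (a consequence of \cref{ass: Alex2022} and \cref{ass: independent_DCT}), it is uniformly continuous on $\mathcal{X}\times\Theta$, so the family $\{\nabla_x H(x,\cdot):x\in\mathcal{X}\}$ is uniformly bounded and equicontinuous. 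Feeding this into $\pi_t\Rightarrow\delta_{\theta^c}$ upgrades the pointwise limit to the uniform statement $\sup_{x\in\mathcal{X}}|\mathbb{E}_{\pi_t}[\nabla_x H(x,\theta)]-\nabla H(x,\theta^c)|\to 0$ w.p.1, whence $|\beta_t|\le\sup_{x\in\mathcal{X}}|\bar g_t(x)-\nabla H(x,\theta^c)|\to 0$ regardless of the trajectory of $x_t$. Consequently $\sum_{i=n}^{m}a_i\beta_i\to 0$ over any window of bounded continuous-time length, so the bias contributes nothing to the limiting dynamics.

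Finally, with bounded iterates, vanishing bias, a continuous mean field, and asymptotically negligible noise in hand, I would invoke the ODE method theorem for projected stochastic approximation (\cite{Kushner2003}): the interpolations $\{X^n(\cdot)\}$ are equicontinuous in the extended sense and hence relatively compact in $\mathcal{D}^d[0,\infty)$, so some subsequence converges w.p.1 to a limit $X^*(\cdot)$, and the vanishing of $\beta_t$ and $\delta M_t$ forces $X^*$ to solve \eqref{ODE_independent}, with the projection terms $\{z_t\}$ contributing the Clarke normal-cone reflection term $z\in-\mathcal{C}(X)$ in the limit. A standard characterization of the limit points then yields that $\{x_t\}$ converges w.p.1 to the limit set of \eqref{ODE_independent}, completing the proof.
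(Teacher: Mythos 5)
Your proposal is correct and follows essentially the same route as the paper: the same decomposition of the stochastic gradient into the true mean field $-\nabla H(\cdot,\theta^c)$ plus a bias and a martingale-difference noise, with the bias eliminated via the Bayesian consistency in \cref{lemma: Alex2022} and the conclusion obtained by invoking Theorem 5.2.3 of \cite{Kushner2003}. The only difference is one of detail: where you bridge the gap between weak convergence of $\pi_t$ and the bias at the moving iterate $x_t$ by an equicontinuity argument on the compact set $\mathcal{X}\times\Theta$, the paper simply cites Theorem 3.1 of \cite{Wu2018}, which serves exactly that purpose.
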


\begin{proof}
Note that 
\begin{align*}
    & \mathbb{E}\left[ \nabla_{x} h\left(x_{t}, \xi_{t}\right)|x_1, y_s, \xi_s, s<t \right] \\
    &= \mathbb{E}_{\pi_{t}}[\mathbb{E}_{f(\cdot;\theta)}\left[\nabla_{x} h\left(x_{t}, \xi\right)\right]]\\
    &= \nabla_x H(x_t, \theta^c) +\left(\mathbb{E}_{\pi_{t}}[\mathbb{E}_{f(\cdot;\theta)}\left[\nabla_{x} h\left(x_{t}, \xi\right)\right]] -\nabla_x H(x_t, \theta^c)\right)\\
    &= \nabla_x H(x_t, \theta^c) +\left(\mathbb{E}_{\pi_{t}}[\mathbb{E}_{f(\cdot;\theta)}\left[\nabla_{x} h\left(x_{t}, \xi\right)\right]] -\mathbb{E}_{\delta_{\theta^c}}[\mathbb{E}_{f(\cdot;\theta)}\left[\nabla_{x} h\left(x_{t}, \xi\right)\right]]\right).
\end{align*}
Let $\epsilon_t =\mathbb{E}_{\pi_{t}}[\mathbb{E}_{f(\cdot;\theta)}\left[\nabla_{x} h\left(x_{t}, \xi\right)\right]] -\mathbb{E}_{\delta_{\theta^c}}[\mathbb{E}_{f(\cdot;\theta)}\left[\nabla_{x} h\left(x_{t}, \xi\right)\right]]$. By \cref{lemma: Alex2022}, $\pi_t(\theta) \Rightarrow \delta_{\theta^c}(\theta)$ w.p.1 ($\mathbb{P}_{\theta^c}^{\infty}$), and by Theorem 3.1 in \cite{Wu2018}, $\epsilon_t \rightarrow 0$ as $t \to \infty$ w.p.1 ($\mathbb{P}_{\theta^c}^{\infty}$). We can then directly apply Theorem 5.2.3 in \cite{Kushner2003} and obtain the result.
\end{proof}

\begin{remark}
The SGD iterates specified in \eqref{eq: SGD_independent} approach the solution trajectory of the ODE \eqref{ODE_independent} and eventually converges to a limit point of the ODE, which is a point $x^{*}$ satisfying $\nabla H(x^{*},\theta^c)=0$ if the point is in the interior of $\mathcal{X}$. Hence, such a point is a stationary point of problem \eqref{eq: obj-independent} for the decision-independent case and can be a local optimal solution if it is stable. On a related note, stochastic gradient Langevin dynamic (SGLD), a popular variant of SGD, adds properly scaled isotropic Gaussian noise to an unbiased estimate of the gradient at each iteration, which allows the solution trajectory to escape local minimum and guarantees asymptotic convergence to a global minimizer for sufficiently regular non-convex objectives (see \cite{raginsky2017non, durmus2019analysis} and references therein). It is an interesting future direction to apply SGLD to our considered stochastic optimization problem with streaming input data.
\end{remark}

Next, we investigate the convergence rate of \cref{algorithm: independent} for the unconstrained case, i.e., without the projection term $a_t z_t$ under the following additional assumptions.

\begin{assumption}\label{ass: independent_rate}
~
\begin{itemize}
    \item The parameter space $\Theta$ is finite, i.e., $\Theta = \{\theta_1,\ldots,\theta_k\}$. Moreover, $\theta^c \in \Theta$.
    \item There exists $0<L_{H}<\infty$ such that $||\nabla_x H(x,\theta_1)-\nabla_x H(x,\theta_2)||_2 \leq L_{H}||\theta_1-\theta_2||_2$ for all $\theta_1,\theta_2 \in \Theta$ and for all $x \in \mathcal{X}$.
    \item Sampling variance is bounded by $\sigma^2$, i.e., $\mathbb{E}[||\nabla_x h(x,\xi) - \nabla_x H(x,\theta)||_2^2|\theta] \leq \sigma^2$, for all $\theta \in \Theta$.
\end{itemize}
\end{assumption}

Due to technical challenges, in \cref{ass: independent_rate} we only consider a finite parameter space, which is practical in many real-world problems. For example, it can be viewed as a discrete approximation of a continuous parameter set, and the discretization can be chosen of any precision. The second assumption essentially requires $H(x,\theta)$ is $C^{1}$-smooth in $\theta$ for all $x \in \mathcal{X}$ and is a common assumption in stochastic approximation literature (cf. \cite{Song2019a}). The bounded sampling variance is also a common assumption in non-convex SGD convergence analysis (cf. \cite{shamir2013stochastic}). 

Under \cref{ass: independent_rate}, we can show the bias term (the difference between $\mathbb{E}_{\pi_t} \nabla_x H(x,\theta)$ and $\mathbb{E}_{\pi_t} \nabla_x H(x,\theta^c)$) can be upper bounded with high probability, which serves as a key lemma in showing the convergence rate of the decision-independent algorithm. 

\begin{lemma}\label{lemma: independent_concentration}
Under \cref{ass: independent_rate}, there exists a constant $C_1>0$ such that for any $\delta>0$, with probability at least $1-\delta$ we have
\begin{align*}
	||\mathbb{E}_{\pi_t}\nabla_x H(x, \theta)-\mathbb{E}_{\pi_t}\nabla_x H(x, \theta^c)||_2^2\leq C_1 \frac{\log Dt + \log \frac{1}{\delta}}{Dt}, \forall x\in \mathcal{X}, \forall t>0. 
\end{align*}
\end{lemma}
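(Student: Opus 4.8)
The plan is to reduce the claim to a high-probability, uniform-in-$t$ bound on the posterior mass that $\pi_t$ assigns to the ``wrong'' parameters, and then to control that mass through a likelihood-ratio martingale. First I would exploit the $\theta$-Lipschitz condition in \cref{ass: independent_rate}. Since $\Theta=\{\theta_1,\dots,\theta_k\}$ is finite and $\mathbb{E}_{\pi_t}\nabla_x H(x,\theta^c)=\nabla_x H(x,\theta^c)$ (the integrand does not depend on the integration variable), I can write
\[
\mathbb{E}_{\pi_t}\nabla_x H(x,\theta)-\mathbb{E}_{\pi_t}\nabla_x H(x,\theta^c)=\sum_{i:\theta_i\neq\theta^c}\pi_t(\theta_i)\bigl(\nabla_x H(x,\theta_i)-\nabla_x H(x,\theta^c)\bigr),
\]
whence $\|\cdot\|_2\le L_H\sum_{i\neq c}\pi_t(\theta_i)\|\theta_i-\theta^c\|_2\le L_H\,\mathrm{diam}(\Theta)\,(1-\pi_t(\theta^c))$, uniformly in $x$. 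I would record two facts: this gives the trivial bound $\|\cdot\|_2\le B:=L_H\,\mathrm{diam}(\Theta)$ always, and the only indices that matter are the \emph{identifiable} ones $I_1=\{i\neq c: f(\cdot;\theta_i)\neq f(\cdot;\theta^c)\}$, since for $i$ with $f(\cdot;\theta_i)=f(\cdot;\theta^c)$ we have $H(x,\theta_i)=H(x,\theta^c)$ and the corresponding term vanishes.

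Next I would control $\pi_t(\theta_i)$ for $i\in I_1$. Writing $w_i=\pi_0(\theta_i)/\pi_0(\theta^c)$ (finite and positive by the boundedness of $\ln\pi_0$) and the likelihood ratio $R_t^{(i)}=\prod_{s=1}^{t}\prod_{j=1}^{D} f(y_{s,j};\theta_i)/f(y_{s,j};\theta^c)$, the discrete form of \cref{eq:independent-posterior} gives $\pi_t(\theta_i)\le w_i R_t^{(i)}$. The key observation is that $\int\sqrt{f(y;\theta_i)/f(y;\theta^c)}\,f(y;\theta^c)\,dy=\rho_i$, the Hellinger affinity, and $\rho_i<1$ strictly for $i\in I_1$ by Cauchy--Schwarz. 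Hence $M_t^{(i)}:=(R_t^{(i)})^{1/2}\rho_i^{-Dt}$ is a nonnegative $\mathcal{F}_t$-martingale with $\mathbb{E}M_t^{(i)}=1$, because each of the $Dt$ i.i.d.\ factors $(f(y;\theta_i)/f(y;\theta^c))^{1/2}$ has mean $\rho_i$ under $f(\cdot;\theta^c)$. Applying Ville's (Doob's) maximal inequality $\mathbb{P}(\sup_t M_t^{(i)}\ge a)\le 1/a$ with $a=k/\delta$ and a union bound over the at most $k$ indices in $I_1$ yields, with probability at least $1-\delta$ and \emph{simultaneously for all} $t$,
\[
R_t^{(i)}\le (k/\delta)^2\,\rho_i^{2Dt},\qquad i\in I_1.
\]
Note this requires no moment/boundedness hypotheses on the log-likelihood ratios: the affinity is automatically finite, which is why the Hellinger exponent $\tfrac12$ is the natural choice.

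Combining the two pieces, on this good event $\|\cdot\|_2\le\min\{B,\;B\,W(k/\delta)^2\rho^{2Dt}\}$ with $\rho=\max_{i\in I_1}\rho_i<1$ and $W=\sum_i w_i$, so the squared quantity is at most $B^2\min\{1,\,V\delta^{-4}e^{-2\kappa Dt}\}$ where $\kappa=2\log(1/\rho)>0$ and $V=W^2k^4$. It then remains to verify $B^2\min\{1,\,V\delta^{-4}e^{-2\kappa Dt}\}\le C_1\frac{\log Dt+\log(1/\delta)}{Dt}$ for a suitable $C_1$. Setting $n=Dt$ and $n_0=\frac{1}{2\kappa}(\log V+4\log(1/\delta))$ (the crossover where the exponential factor equals $1$), for $n\ge n_0$ the exponential term dominates and the bound is beaten by the polynomial right-hand side; for $n<n_0$ one uses the trivial bound $B^2$ together with $\log(1/\delta)\gtrsim\kappa n$ (which holds precisely in this range), so that $\frac{\log n+\log(1/\delta)}{n}\gtrsim\kappa$ and $C_1\asymp B^2/\kappa$ closes the estimate.

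The main obstacle is this final reconciliation, and it is conceptual as much as computational. The posterior mass actually decays \emph{exponentially} in $Dt$, so the stated polynomial rate $\tfrac{\log Dt+\log(1/\delta)}{Dt}$ is only recovered after intersecting the exponential concentration bound with the trivial boundedness of the left-hand side; the delicate point is the matching at the crossover $n_0$ and the consequent choice of $C_1$ (which may depend on $D$, $\rho$, $k$, $\mathrm{diam}(\Theta)$ and the prior ratios, but not on $t$ or $\delta$). A related subtlety is that a naive per-$t$ Chernoff bound followed by a union over $t$ would not sum to an arbitrarily small $\delta$, since at small $Dt$ the posterior has simply not concentrated; the uniform-in-$t$ guarantee must therefore come from a maximal inequality for the likelihood-ratio martingale rather than from summing tail probabilities, which is the step I would treat most carefully.
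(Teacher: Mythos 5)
Your proof is correct and reaches the stated bound, but by a genuinely different route than the paper. Both proofs make the same opening reduction: by Lipschitzness in $\theta$ (via Jensen in the paper, via the triangle inequality in yours), the bias is controlled by the posterior mass that $\pi_t$ places away from $\theta^c$. They diverge in how that mass is controlled. The paper invokes Proposition 1 of \cite{birge2015non} as a black box: the posterior concentrates on a Hellinger ball of radius $k(t)/\sqrt{Dt}$ around $\theta^c$ with per-stage failure probability $6\delta/(\pi^2 t^2)$, and uniformity in $t$ comes from summing these probabilities over $t$ (the extra $\log t^2$ this forces into $k(t)$ is absorbed into $\log Dt$). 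You instead give a self-contained argument: posterior odds are bounded by prior odds times the likelihood ratio, the square-rooted likelihood ratio normalized by the Hellinger affinity $\rho_i<1$ is a mean-one nonnegative martingale, and Ville's maximal inequality with a union bound over the $k$ parameters (rather than over time) yields a single event, uniform in $t$, on which the off-$\theta^c$ mass decays like $\rho^{2Dt}$; your crossover argument then trades this exponential decay for the stated polynomial rate. Your route buys self-containedness, a strictly stronger (exponential) concentration statement, and correct handling of non-identifiable parameters: the paper's claim that $\|\theta_1-\theta_2\|\le A\, d(\theta_1,\theta_2)$ for some constant $A$ is false if two distinct elements of $\Theta$ induce the same density, whereas in your decomposition such terms vanish identically. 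What the paper's route buys is brevity (given the citation) and machinery not tied to finiteness of $\Theta$. Two blemishes are shared by both proofs, so they do not count against you: absorbing additive constants into $C_1(\log Dt+\log\frac{1}{\delta})$ fails in the corner $Dt=1$, $\delta\to 1$, where the right-hand side vanishes (both arguments implicitly need, say, $\delta\le 1/2$); and both use $\pi_0(\theta^c)>0$ and the i.i.d. data model, which live in \cref{ass: Alex2022} rather than in \cref{ass: independent_rate} as the lemma is stated. One last correction: your closing worry that a per-$t$ bound cannot be unioned over $t$ is overstated --- that is exactly what the paper does, by allocating failure probability proportional to $\delta/t^2$ to stage $t$ and absorbing the resulting $\log t$ into $\log Dt$; the maximal inequality is an elegant alternative, not a necessity.
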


The proof of \cref{lemma: independent_concentration} can be found in \cref{appendix: A}. Next, we show the convergence rate of \cref{algorithm: independent}. To simplify the analysis and also be consistent with the convergence analysis of smooth non-convex SGD, we consider a variant of SGD where the final output is randomly chosen as follows: let $z_T = x_{t}$ with probability $\frac{a_t}{\sum_{t=1}^{T}a_t}$, $t=1,\cdots,T$. The randomization scheme helps with the analysis of the expected gradient of the final output under the true parameter $\theta^c$, and has been widely used in the smooth non-convex SGD literature (cf. \cite{ghadimi2013stochastic}). We then have the following theorem giving the convergence rate of the randomized output algorithm under different step sizes.

\begin{theorem}\label{thm: independent_rate}
Under \cref{ass: Alex2022}, \cref{ass: independent_DCT}, \cref{ass: independent_step}, and \cref{ass: independent_rate}, for any $\delta>0$, we have with probability at least $1-\delta,$ for any $T>0$, the following bound on the expected gradient of the final output under the true parameter $\theta^c$  
\begin{enumerate}
    \item[(i)] If the step size satisfies $a_t=\frac{a}{\sqrt{T}}$, $\forall t \leq T$, for some constant $a<\frac{\sqrt{T}}{L_h}$, then 
    {\small
    \begin{align*}
        & \mathbb{E}[\|\nabla_x H(z_T,\theta^c) \|_2^2] \\
	    &\leq \left[\frac{2(H(x_1,\theta^c)- \min_{x\in\mathcal{X}}H(x,\theta^c))}{a\sqrt{T}}\right] + \left[\frac{A_1}{T} + \frac{A_2 \log T}{T} + \frac{A_3 \log^2 T}{T}\right] + \frac{L_h a\sigma^2}{\sqrt{T}},
    \end{align*}
    }
    where $A_1=\frac{C_1 (\log D - \log \delta)}{L_h D}$, $A_2=\frac{C_1 (\log D - \log \delta)}{L_h D} + \frac{C_1}{L_h D}$, $A_3=\frac{C_1}{L_h D}$.
    \item[(ii)]  If the step size satisfies $a_t = \frac{a}{t}$, $\forall t \leq T$, for some constant $a < \frac{1}{L_h}$, then
    {\small
    \begin{align*}
	& \mathbb{E}[\|\nabla_x H(z_T,\theta^c) \|_2^2]\\
	&\leq \left[\frac{2(H(x_1,\theta^c)- \min_{x\in\mathcal{X}}H(x,\theta^c))}{a} +\frac{6C_1 + \pi^2C_1 (\log D - \log \delta)}{6D}+\frac{\pi^2 L_h a \sigma^2}{6}\right]\frac{1}{\log T}.
    \end{align*}
    }
    \item[(iii)]  If the step size satisfies $a_t = \frac{a}{\sqrt{t}}$, $\forall t \leq T$, for some constant $a < \frac{1}{L_h}$, then
    {\small
    \begin{align*}
	& \mathbb{E}[\|\nabla_x H(z_T,\theta^c) \|_2^2]\\
	&\leq [\frac{2(H(x_1,\theta^c)- \min_{x\in\mathcal{X}}H(x,\theta^c))}{a\sqrt{T}} + \frac{3C_1(\log D - \log \delta) + 4C_1}{D\sqrt{T}} + \frac{L_h a \sigma^2}{\sqrt{T}}] + \frac{L_h a \sigma^2 \log T}{\sqrt{T}}.
    \end{align*}
    }
\end{enumerate}
\end{theorem}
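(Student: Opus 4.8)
The plan is to adapt the classical descent-lemma analysis of smooth non-convex SGD (in the style of \cite{ghadimi2013stochastic}) so as to account for the systematic bias introduced by estimating $\theta^c$ through the posterior $\pi_t$ rather than knowing it exactly. Working with the unconstrained iterate $x_{t+1}=x_t-a_t g_t$, where $g_t:=\nabla_x h(x_t,\xi_t)$ with $\xi_t\sim f(\cdot;\theta_t)$ and $\theta_t\sim\pi_t$, and writing $\mathcal{F}_t$ for the history up to stage $t$, I would first invoke smoothness of $H(\cdot,\theta^c)$ in $x$ (with modulus $L_h$, consistent with the step-size restrictions) to obtain the one-step descent inequality
\begin{align*}
H(x_{t+1},\theta^c) \le H(x_t,\theta^c) - a_t\langle \nabla_x H(x_t,\theta^c), g_t\rangle + \tfrac{L_h a_t^2}{2}\|g_t\|_2^2 .
\end{align*}
Taking the conditional expectation given $\mathcal{F}_t$ and using the unbiasedness of $g_t$ given $\theta$ (\cref{lemma: independent_unbiasedness}), so that $\mathbb{E}[g_t\mid\mathcal{F}_t]=\mathbb{E}_{\pi_t}\nabla_x H(x_t,\theta)$, splits the drift into the true gradient plus the bias term $b_t := \mathbb{E}_{\pi_t}\nabla_x H(x_t,\theta)-\nabla_x H(x_t,\theta^c)$, which is exactly the quantity bounded in \cref{lemma: independent_concentration}.

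The core of the argument is a per-stage recursion for $\mathbb{E}[H(x_{t+1},\theta^c)\mid\mathcal{F}_t]$. I would control the cross term $-a_t\langle\nabla_x H(x_t,\theta^c), \nabla_x H(x_t,\theta^c)+b_t\rangle$ with Young's inequality so that a $-\tfrac{a_t}{2}\|\nabla_x H(x_t,\theta^c)\|_2^2$ term survives while the bias only contributes an $O(a_t)\|b_t\|_2^2$ term. For the second-order term $\tfrac{L_h a_t^2}{2}\mathbb{E}[\|g_t\|_2^2\mid\mathcal{F}_t]$, I would condition on $\theta_t$, use the bounded-sampling-variance part of \cref{ass: independent_rate} to get $\mathbb{E}[\|g_t\|_2^2\mid\mathcal{F}_t]\le \mathbb{E}_{\pi_t}\|\nabla_x H(x_t,\theta)\|_2^2+\sigma^2$, and then use the $\theta$-Lipschitz part of \cref{ass: independent_rate} together with the posterior-concentration estimate underlying \cref{lemma: independent_concentration} (in the finite-$\Theta$ setting the variance of any $\theta$-Lipschitz functional is controlled by the off-truth posterior mass) to relate $\mathbb{E}_{\pi_t}\|\nabla_x H(x_t,\theta)\|_2^2$ to $\|\nabla_x H(x_t,\theta^c)\|_2^2$ up to a further $O(\|b_t\|_2^2)$ perturbation. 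The step-size restriction ($a_t L_h\le 1$, guaranteed by $a<\sqrt{T}/L_h$ in case (i) and $a<1/L_h$ in cases (ii)--(iii)) is precisely what lets the $L_h a_t^2\|\nabla_x H\|_2^2$ contribution be absorbed, leaving a clean recursion of the form
\begin{align*}
\tfrac{a_t}{2}\|\nabla_x H(x_t,\theta^c)\|_2^2 \le \mathbb{E}[H(x_t,\theta^c)-H(x_{t+1},\theta^c)\mid\mathcal{F}_t] + c_1 a_t\|b_t\|_2^2 + \tfrac{L_h a_t^2}{2}\sigma^2 .
\end{align*}

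Next I would work on the good event of probability at least $1-\delta$ on which \cref{lemma: independent_concentration} holds uniformly in $t$ and $x$, so that $\|b_t\|_2^2\le C_1\frac{\log Dt+\log(1/\delta)}{Dt}$ simultaneously for all $t$. Summing the recursion over $t=1,\dots,T$, taking total expectation over the sampling noise, and telescoping the $H$-differences against $H(x_1,\theta^c)-\min_{x\in\mathcal{X}}H(x,\theta^c)$ gives a bound on $\sum_t a_t\,\mathbb{E}\|\nabla_x H(x_t,\theta^c)\|_2^2$. Dividing by $\sum_t a_t$ and recognizing that the randomized output $z_T$ (chosen with probability $a_t/\sum_s a_s$) satisfies $\mathbb{E}\|\nabla_x H(z_T,\theta^c)\|_2^2 = \bigl(\sum_t a_t\,\mathbb{E}\|\nabla_x H(x_t,\theta^c)\|_2^2\bigr)/\sum_t a_t$ converts the bound into the desired left-hand side. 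At this point the three parts of the theorem differ only in how the sums $\sum_t a_t$, $\sum_t a_t^2$, and the bias sum $\sum_t a_t\|b_t\|_2^2$ behave.

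The remaining and most delicate step is the asymptotics of the bias sum. For $a_t=a/\sqrt{T}$ I would use $\sum_{t=1}^T \frac1t = O(\log T)$ and $\sum_{t=1}^T \frac{\log t}{t}=O(\log^2 T)$ to extract the $\frac{A_1}{T}$, $\frac{A_2\log T}{T}$, and $\frac{A_3\log^2 T}{T}$ terms, while $\sum_t a_t^2=a^2$ and $\sum_t a_t=a\sqrt T$ yield the $\frac{2\Delta}{a\sqrt T}$ initialization term and the $\frac{L_h a\sigma^2}{\sqrt T}$ variance term; for $a_t=a/t$ I would use $\sum_t a_t^2\le a^2\pi^2/6$ and $\sum_t a_t=\Theta(\log T)$, which moves the $1/\log T$ prefactor out front; and for $a_t=a/\sqrt t$ I would use $\sum_t a_t=\Theta(\sqrt T)$ together with $\sum_t \frac1t=O(\log T)$ to produce the $\frac{\log T}{\sqrt T}$ variance term. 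I expect the main obstacle to be exactly this bookkeeping: keeping the probabilistic (posterior) randomness cleanly separated from the sampling expectation, and tracking how the $\log Dt$ factor inside the per-stage bias interacts with each step-size schedule to yield the precise powers of $\log T$, rather than any single clever inequality. The smoothness-and-descent skeleton and the use of \cref{lemma: independent_concentration} are routine; the care is entirely in the summation.
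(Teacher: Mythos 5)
Your proposal follows essentially the same route as the paper: decompose the stochastic gradient into the true gradient $\nabla_x H(x_t,\theta^c)$ plus a bias term $B_t$ (controlled uniformly on the probability-$(1-\delta)$ event via \cref{lemma: independent_concentration}) plus zero-mean noise $N_t$, obtain the one-step descent inequality, telescope, divide by $\sum_t a_t$, invoke the randomized-output identity, and then do the step-size-specific summation asymptotics. The only cosmetic difference is that you derive the descent inequality from scratch (smoothness plus Young's inequality plus the variance bound) whereas the paper delegates exactly that step to Lemma 2 of the cited biased-SGD analysis of Ajalloeian and Stich; your slightly more careful handling of the second-moment term $\mathbb{E}[\|g_t\|_2^2\mid\mathcal{F}_t]$ (accounting for the posterior variance of $\nabla_x H(x_t,\theta)$, not just the sampling variance given $\theta$) is if anything a point in your favor, but it does not change the argument.
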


The proof of \cref{thm: independent_rate} can be found in \cref{appendix: B}.
\cref{thm: independent_rate} shows that for the constant step size $a_{t}=\frac{a}{\sqrt{T}}$, the convergence rate is $O(\frac{1}{\sqrt{T}})$. Note that in case (i), the first term in the convergence rate depends on the initialization of the solution (difference between $H(x_1,\theta^c)$ and $\min_x H(x,\theta^c)$); the last term depends on the Lipschitz constant and sampling variance. These two terms are consistent with the classical smooth non-convex SGD (cf. \cite{ghadimi2013stochastic}). The second, third, and fourth terms are caused by the difference between $\mathbb{E}_{\pi_t} \nabla_x H(x,\theta)$ and $\mathbb{E}_{\pi_t} \nabla_x H(x,\theta^c)$, which is due to the Bayesian estimation that is unique to the considered problem. As for the classical decreasing step size $a_{t} = \frac{a}{t}$, the convergence rate is $O(1 / \log T)$. For the bigger decreasing step size $a_{t} = \frac{a}{\sqrt{t}}$, the convergence rate is $O(\log T / \sqrt{T})$. 

\subsection{Convergence analysis for the decision-dependent case}
\label{subsec: dependent}
In this section, we theoretically study the convergence behavior of \cref{algorithm: dependent}. We follow the approach in \cite{cutler2022stochastic} to construct the probability space for the decision-dependent case. Note that the data $y$ takes value in the space $\mathcal{Y}$ equipped with a Borel $\sigma$-algebra $\mathcal{B}_{\mathcal{Y}}$ and a probability measure $\mathbb{P}_{\theta^c}(\cdot|x)$ such that $\mathbb{P}_{\theta^c}(y \in A|x) = \int_A f(y;x,\theta^c)dy, \forall A \in \mathcal{B}_{\mathcal{Y}}$. Suppose that there is a probability space $(\mathcal{S}, \mathcal{H}, \mu)$ and a measurable map $F: \mathcal{S}\times \mathcal{X}\rightarrow \mathcal{Y}$ such that for every  set $A \in \mathcal{B}_{\mathcal{Y}}$, the $\mathbb{P}_{\theta^c}(\cdot|x)$-measure of $A$ is equal to the $\mu$-measure of the set $\{s\in\mathcal{S}: F(s,x)\in A\}$. Then we define $(\Omega, \mathcal{F}, \mathbb{P}_{\theta^c}^\infty)$ as the countable product $(\mathcal{S}, \mathcal{H},\mu)^\infty$. In the following, w.p.1 (or almost surely) means that the considered property holds with probability one w.r.t. the probability measure $\mathbb{P}_{\theta^c}^{\infty}$. Let $\mathcal{F}_t=\sigma\{(x_{\tau},y_{\tau}), \tau \leq t\}$ be the $\sigma$-filtration generated by the data and decision sequences. For simplicity, we assume at each time stage the data batch size $D=1$ and the number of SGD iterations $K=1$. We have the convergence of the posterior distribution $\{\pi_t\}$ that is updated according to \cref{eq:dependent-posterior} under the following assumptions.

\begin{assumption}
~
\begin{itemize}
    \item The parameter space $\Theta$ is discrete. Moreover, $\theta^c \in \Theta$.
    \item The prior distribution $\pi_0(\theta^c) > 0$.
\end{itemize}
\label{ass: consistency}
\end{assumption}

The assumptions above are regularity conditions and easy to be verified in practice. Note that \cref{algorithm: dependent} works for a general parameter space, but due to technical challenges, we assume a discrete parameter space for the convergence analysis. Note that for the decision-dependent case, the correlated and differently distributed data $\{\mathbf{y}_t\}$ pose a great challenge to analyzing the consistency of the Bayesian posterior distribution $\pi_t$. To prove the Bayesian consistency, we first show the following intermediate result. Let $D_{K L}(P\|Q):=\int \log \left(\frac{d P}{d Q}\right) d P$ denote the Kullback-Leibler (K-L) divergence from distribution $P$ to distribution $Q$.

\begin{lemma}
Suppose \cref{ass: consistency} holds. Recall $\hat f_t(\cdot;x) = \sum_\theta\pi_t(\theta) f(\cdot;x,\theta)$. Denote $ f^*(\cdot;x) :=  f(\cdot;x,\theta^c)$, for any $x \in \mathcal{X}.$  At decision $x_{t+1},$ the K-L divergence from $f^*$ to $\hat f_t$ is denoted as $d_t,$ i.e., $d_t :=D_{KL}(f^*(\cdot;x_{t+1})||\hat f_t(\cdot;x_{t+1})).$ Then we have $$\lim_{t\rightarrow \infty} d_t = 0~~\text{and}~~ \sum_{t=1}^\infty d_t<\infty,  ~\text{w.p.1} (\mathbb{P}_{\theta^c}^{\infty}).$$ 
\label{lemma: consistency}
\end{lemma}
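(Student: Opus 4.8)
The plan is to convert the posterior mass placed on the true parameter into a nonnegative supermartingale whose Doob compensator is exactly the sequence $\{d_t\}$, so that both summability and $d_t \to 0$ drop out of a single Robbins--Siegmund argument. The feature that makes this work despite the correlated, non-stationary, decision-dependent data is that the decision $x_t$ and the posterior $\pi_{t-1}$ are $\mathcal{F}_{t-1}$-measurable (recall $\mathcal{F}_t=\sigma\{(x_\tau,y_\tau),\tau\le t\}$ and $x_t$ is computed by SGD from data up to stage $t-1$). Hence, conditioned on $\mathcal{F}_{t-1}$, the new datum $y_t$ is drawn from $f^*(\cdot;x_t)$ with the design point $x_t$ \emph{frozen}, which restores a clean ``fixed-design'' structure at each step and lets each increment be computed as an honest KL divergence.

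First I would specialize the update \eqref{eq:dependent-posterior} to $D=1$ and, evaluating at $\theta=\theta^c$, read off the recursion $\pi_t(\theta^c)=\pi_{t-1}(\theta^c)\,f^*(y_t;x_t)/\hat f_{t-1}(y_t;x_t)$, where $\hat f_{t-1}(\cdot;x_t)=\sum_\theta \pi_{t-1}(\theta)f(\cdot;x_t,\theta)$ is the one-step predictive density. The key structural fact I would record here is the mixture domination $\hat f_{t-1}(\cdot;x_t)\ge \pi_{t-1}(\theta^c)\,f^*(\cdot;x_t)$, valid because $\theta^c\in\Theta$ carries positive posterior weight; combined with $\pi_0(\theta^c)>0$ and positivity of the densities, this shows $\pi_t(\theta^c)\in(0,1]$ for all $t$, so that $G_t:=-\log\pi_t(\theta^c)$ is well defined, nonnegative, and starts at the finite value $G_0=-\log\pi_0(\theta^c)$.

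Next I would compute the conditional expectation. Taking $-\log$ of the recursion and conditioning on $\mathcal{F}_{t-1}$ (under which $x_t$ and $\hat f_{t-1}(\cdot;x_t)$ are determined and $y_t\sim f^*(\cdot;x_t)$) gives $\mathbb{E}[G_t\mid\mathcal{F}_{t-1}]=G_{t-1}-d_{t-1}$, where $d_{t-1}=D_{KL}\big(f^*(\cdot;x_t)\|\hat f_{t-1}(\cdot;x_t)\big)$, matching the lemma's index convention. The same domination bound yields $0\le d_{t-1}\le \log(1/\pi_{t-1}(\theta^c))<\infty$ and shows the negative part of the log-ratio is integrable (via $\log u\le u-1$, its conditional expectation is at most $1$), so $G_t$ is genuinely integrable and the supermartingale identity is legitimate. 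Since $d_{t-1}\ge 0$, $\{G_t\}$ is a nonnegative supermartingale, and I would invoke the Robbins--Siegmund theorem (equivalently, the Doob decomposition of a nonnegative supermartingale) to conclude that $G_t$ converges a.s. to a finite limit and, crucially, that $\sum_{t\ge 1} d_{t-1}<\infty$ w.p.1 $(\mathbb{P}_{\theta^c}^{\infty})$. Re-indexing gives $\sum_{t\ge 1} d_t<\infty$, and summability of a nonnegative sequence forces $d_t\to 0$.

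The main obstacle is not the supermartingale bookkeeping, which is routine once set up, but rather the two integrability/measurability checks that guarantee the argument is honest in the decision-dependent regime: (a) ensuring $\pi_t(\theta^c)>0$ and that $d_{t-1}$ together with the negative part of $\log\big(f^*/\hat f_{t-1}\big)$ are finite/integrable, both of which I would handle through the single domination inequality $\hat f_{t-1}(\cdot;x_t)\ge\pi_{t-1}(\theta^c)f^*(\cdot;x_t)$; and (b) making explicit that $x_t$ is $\mathcal{F}_{t-1}$-measurable, so that the decision-dependence of the data does \emph{not} break the supermartingale structure and the per-step increment is a bona fide KL divergence evaluated at the frozen point $x_t$. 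Everything else reduces to the standard compensator argument.
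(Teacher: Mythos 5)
Your proof is correct in substance and rests on exactly the same engine as the paper's own argument in \cref{appendix: D}: the potential $w_t=-\log\pi_t(\theta^c)$ and the identity that its one-step conditional decrease is precisely the KL compensator $d_t$. Where you differ is the finishing step. The paper stays at the level of unconditional expectations: it telescopes $\mathbb{E}[d_t]=\mathbb{E}[w_t]-\mathbb{E}[w_{t+1}]$ to get $\sum_t\mathbb{E}[d_t]\le w_0<\infty$, then converts to almost-sure statements via Markov plus Borel--Cantelli (for $d_t\to 0$) and Tonelli (for $\sum_t d_t<\infty$). You instead keep the pathwise supermartingale structure and invoke Robbins--Siegmund, which delivers both conclusions in one stroke and yields, as a bonus the paper does not claim, almost-sure convergence of $\pi_t(\theta^c)$ itself; your domination inequality $\hat f_{t-1}(\cdot;x_t)\ge\pi_{t-1}(\theta^c)f^*(\cdot;x_t)$ also makes explicit the integrability checks that the paper leaves implicit. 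One imprecision to repair: under the paper's filtration $\mathcal{F}_{t-1}=\sigma\{(x_\tau,y_\tau),\tau\le t-1\}$, the decision $x_t$ is \emph{not} $\mathcal{F}_{t-1}$-measurable, because the SGD step producing $x_t$ draws fresh samples $\theta_{t-1}\sim\pi_{t-1}$ and $\xi_{t-1}\sim f(\cdot;x_{t-1},\theta_{t-1})$, which are algorithmic randomness rather than functions of the observed data and past decisions. The fix is immediate and is exactly what the paper does: condition on the enlarged $\sigma$-field $\sigma\left(\mathcal{F}_{t-1}\cup\sigma(x_t)\right)$ (the paper writes $\mathbb{E}[\,\cdot\,|\mathcal{F}_t,x_{t+1}]$), or equivalently enlarge the filtration to include the algorithm's sampling randomness. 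Either way, $d_{t-1}$ is measurable with respect to the conditioning field, $y_t\sim f^*(\cdot;x_t)$ conditionally on it, and your Robbins--Siegmund application goes through unchanged.
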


The proof of \cref{lemma: consistency} can be found in \cref{appendix: D}. Intuitively, \cref{lemma: consistency} implies that with more observation data even at different decisions, we know more about the true parameter $\theta^c$ and are able to provide a more precise estimation of the density $f^*$ at the next decision. Moreover, if we know that each $\theta$ is identifiable as rigorously defined in the following assumption, we can further prove the consistency of $\{\pi_t\}$ regardless of the correlation and non-stationarity of the observation data.

\begin{assumption}[Linear Independence]
\label{ass:linear_independence}
For almost every $x$ in $\mathcal{X}$, for any $\mathcal{K} \subseteq \mathbb{N}$ where $\mathbb{N}$ is the set of natural numbers, $\{f(\cdot;x,\theta_i)\}_{i \in \mathcal{K}}$ are linearly independent in $\mathcal{Y},$ i.e., 
$$ \sum_{i \in \mathcal{K}} c_i f(y;x,\theta_i)=0,~~ \forall y \in\mathcal{Y}~~\Rightarrow ~~c_i=0 ~~ \forall i \in \mathcal{K}.$$ 
\end{assumption}

\cref{ass:linear_independence} intuitively requires that for almost every decision $x$, the observation distributions generated from different $\theta$'s are distinguishable (or identifiable, cf. Definition 5.2 in \cite{lehmann2006theory}). For the ease of notation, we denote the density function as $f(\cdot;x,\theta):=f(\cdot;g(x,\theta))$, where $g: \mathbb{R}^{d} \times \mathbb{R}^{l} \to \mathbb{R}^{s}$ is a mapping from $\mathcal{X} \times \Theta$ to the $s$-dimensional parameter space of the distribution. A necessary condition for \cref{ass:linear_independence} to hold is: $g(x,\theta_1) \neq g(x,\theta_2)$ for almost every $x \in \mathcal{X}$ and for all $\theta_1 \in \Theta$, $\theta_2 \in \Theta$ such that $\theta_1 \neq \theta_2$. Under this necessary condition, \cref{ass:linear_independence} is satisfied by many distributions families. For example, the Wronskian Determinant for exponential distributions with different parameters $g(x,\theta_1), \cdots, g(x,\theta_n)$ is computed as $W(\xi)=\prod_{i=1}^{n}g(x,\theta_i) \exp(-\sum_{i=1}^{n} g(x,\theta_i) \xi) \prod_{i \neq j}(g(x,\theta_i)$
$-g(x,\theta_j))$, which is nonzero for almost every $x \in \mathcal{X}$ and all $\xi \in \Xi$ when $\theta_i$'s are distinct, which directly implies the linear independence of $\{f(\cdot;x,\theta_i)\}_i$. For other exponential families, such as normal, gamma, and Poisson, a general solution to check the Wronskian Determinant may not be readily available. Instead, one could check whether the components of the sufficient statistics are linearly independent, i.e., whether the exponential family is minimal (cf. Chapter 1.5 in \cite{lehmann2006theory}).

\begin{assumption}\label{ass: compact space}
The decision space $\mathcal{X} \subset \mathbb{R}^{d}$ is compact and convex.
\end{assumption}

We then have the following proposition on the consistency of the posterior distribution $\{\pi_t\}$.

\begin{proposition}
Under \cref{ass: consistency}, \cref{ass:linear_independence} and \cref{ass: compact space},  $\pi_t \Rightarrow \delta_{\theta^c}$ w.p.1 ($\mathbb{P}_{\theta^c}^{\infty}$).
\label{lemma: pi_consistency}
\end{proposition}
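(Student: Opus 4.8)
The plan is to reduce the weak-convergence statement to a single scalar limit and then derive a contradiction from \cref{lemma: consistency} together with the identifiability in \cref{ass:linear_independence}. Since $\Theta$ is discrete, every bounded function on $\Theta$ is continuous, so $\pi_t \Rightarrow \delta_{\theta^c}$ is equivalent to $\pi_t(\theta^c) \to 1$. Indeed, if $\pi_t(\theta^c)\to 1$ then for any bounded $g$ we have $|\sum_\theta \pi_t(\theta) g(\theta) - g(\theta^c)| \le \|g\|_\infty(1-\pi_t(\theta^c)) \to 0$, while the converse follows by testing against $g = \mathbf{1}_{\{\theta^c\}}$. Thus it suffices to prove $\pi_t(\theta^c)\to 1$ w.p.1 ($\mathbb{P}_{\theta^c}^\infty$).

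First I would convert the K-L statement of \cref{lemma: consistency} into a total-variation statement. By Pinsker's inequality, $\|f^*(\cdot;x_{t+1}) - \hat f_t(\cdot;x_{t+1})\|_{TV} \le \sqrt{d_t/2}$, so \cref{lemma: consistency} gives $\|f^*(\cdot;x_{t+1}) - \hat f_t(\cdot;x_{t+1})\|_{TV} \to 0$ w.p.1. Writing $\hat f_t = \sum_\theta \pi_t(\theta) f(\cdot;x_{t+1},\theta)$ and $f^* = f(\cdot;x_{t+1},\theta^c)$, this says that the posterior-weighted mixture of the candidate densities collapses onto the true density along the realized decisions.

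The core of the argument is a contradiction on a subsequence. Suppose $\pi_t(\theta^c)\not\to 1$ on a positive-probability event; then there exist $\epsilon>0$ and a (random) subsequence with $1-\pi_{t_k}(\theta^c)\ge \epsilon$. Since $\mathcal{X}$ is compact (\cref{ass: compact space}) and the probability simplex over the discrete $\Theta$ is sequentially compact coordinatewise via a diagonal extraction, I would pass to a further subsequence along which $x_{t_k+1}\to \bar x \in \mathcal{X}$ and $\pi_{t_k}(\theta)\to \bar\pi(\theta)$ for every $\theta$, with $\bar\pi(\theta^c)\le 1-\epsilon$. Using continuity of $f(\cdot;x,\theta)$ in $x$ (so that $f(\cdot;x_{t_k+1},\theta^c)\to f(\cdot;\bar x,\theta^c)$, upgraded to $L^1$ by Scheff\'e's lemma since these are densities) together with the vanishing total-variation gap, I would pass to the limit to obtain the pointwise identity $f(\xi;\bar x,\theta^c) = \sum_\theta \bar\pi(\theta) f(\xi;\bar x,\theta)$ for a.e. $\xi$. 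Rearranging gives a nontrivial linear relation $\sum_\theta c_\theta f(\cdot;\bar x,\theta) \equiv 0$ with $c_{\theta^c} = 1-\bar\pi(\theta^c)\ge\epsilon>0$, which contradicts the linear independence of $\{f(\cdot;\bar x,\theta)\}_\theta$ from \cref{ass:linear_independence}. This contradiction forces $\pi_t(\theta^c)\to 1$ w.p.1, completing the proof.

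The main obstacle I anticipate is matching this limiting argument to the hypotheses of \cref{ass:linear_independence}, which are stated for \emph{almost every} $x$ rather than for the particular limit point $\bar x$; one must ensure the nontrivial linear relation is evaluated at a decision where identifiability holds, e.g. by exploiting continuity in $x$ to transport the relation to nearby good decisions, or by strengthening to a uniform-in-$x$ identifiability over the compact set $\mathcal{X}$. A secondary technical point, relevant when $\Theta$ is countably infinite, is to rule out escape of posterior mass so that $\bar\pi$ remains a genuine probability measure; this can be controlled by integrating the limiting identity (whose right-hand side integrates to $\sum_\theta\bar\pi(\theta)$, which must then equal $1$) together with a tightness estimate. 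The non-i.i.d., decision-dependent nature of the data does not re-enter here, as it has already been absorbed into \cref{lemma: consistency}; the remaining work is the deterministic passage from vanishing K-L divergence to weak convergence of the posterior.
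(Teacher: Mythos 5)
Your proposal is correct and takes essentially the same route as the paper's own proof: both arguments extract, via compactness of $\mathcal{X}$ and coordinatewise compactness of the posterior sequence, a subsequence along which $x_{t_k+1}\to \bar x$ and $\pi_{t_k}\to\bar\pi$, convert the K-L statement of \cref{lemma: consistency} into a vanishing $L^1$ (total-variation) gap, pass to the limit to obtain a linear relation among $\{f(\cdot;\bar x,\theta)\}_{\theta}$, and invoke \cref{ass:linear_independence} to force $\bar\pi=\delta_{\theta^c}$, whence all subsequential limits coincide. The two caveats you flag -- the ``almost every $x$'' qualifier in \cref{ass:linear_independence} versus the particular limit point $\bar x$, and possible escape of posterior mass when $\Theta$ is countably infinite -- are genuine subtleties, and in fact the paper's own proof passes over both of them silently.
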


The proof of \cref{lemma: pi_consistency} can be found in \cref{appendix: E}. \cref{lemma: pi_consistency} guarantees that although the observation at each time depends on the current decision, it can provide enough information to ensure the posterior distribution will eventually concentrate on the true parameter. In the following, we will show that the consistency of $\{\pi_t\}$ ensures that the gradient estimator is accurate enough and thus \cref{algorithm: dependent}  converges.

\begin{remark}
We note that the consistency of posterior distributions for non i.i.d. observations is previously shown in \cite{ghosal2007convergence}. However, they give very general convergence result with assumptions (such as existence of testing function sequence) that are often abstract  and hard to verify in practice. On the other hand, our Bayesian consistency result is built on assumptions (in particular \cref{ass:linear_independence}) that are easy to verify and interpret.
\end{remark}

We then study the asymptotic behavior of \cref{algorithm: dependent} by the ODE method similar to the decision-independent case. We can rewrite the SGD iteration \cref{eq: SGD_dependent} as
\begin{equation}\label{eq: SGD_dependent_2}
    x_{t+1} = x_t - a_t \left(\nabla_x h(x_t,\xi_t)+h(x_t,\xi_t)\frac{
       \nabla_x\widehat{f}_{t}\left(\xi_{t} ; x_{t}\right)}{ \widehat{f}_{t}\left(\xi_{t} ; x_{t}\right)}\right)+a_t z_t,
\end{equation}
where $a_t z_t$ is the projection term. We show that under certain mild conditions, the proposed gradient estimator \cref{eq: SGD_dependent_2} is unbiased. 

\begin{assumption}\label{ass: dependent_DCT}
The density function $f(\xi;x,\theta)$ is $C^{1}$-smooth in $x$ for all $\xi \in \Xi$ and for all $\theta \in \Theta$.
\end{assumption}

Together with \cref{ass: independent_DCT}, \cref{ass: dependent_DCT} puts mild conditions that justify the interchange between differentiation and integral for the decision-dependent case. 

\begin{lemma}\label{lemma: dependent_unbiasedness}
Under \cref{ass: independent_DCT} and \cref{ass: dependent_DCT}, we have that $\nabla_x h(x_t,\xi)+h(x_t,\xi)\frac{\nabla_x\widehat{f}_{t}\left(\xi ; x_{t}\right)}{ \widehat{f}_{t}\left(\xi; x_{t}\right)}$ with $\xi \sim f(\cdot;x_t, \theta)$ and $\theta \sim \pi_t$ is an unbiased gradient estimator of the objective function in \cref{eq: dependent}.
\end{lemma}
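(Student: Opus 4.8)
The plan is to verify directly that the expectation of the proposed estimator, taken over the two-stage draw $\theta \sim \pi_t$ followed by $\xi \sim f(\cdot;x_t,\theta)$, equals the gradient of the objective $\mathbb{E}_{\pi_t}[H(x,\theta)] = \mathbb{E}_{\pi_t}[\mathbb{E}_{f(\cdot;x,\theta)}[h(x,\xi)]]$ evaluated at $x=x_t$. Writing $G(x,\xi) := \nabla_x h(x,\xi) + h(x,\xi)\frac{\nabla_x\widehat{f}_t(\xi;x)}{\widehat{f}_t(\xi;x)}$ for the estimator, the central structural observation is that under this two-stage sampling the marginal density of $\xi$ is exactly the Bayesian mixture $\widehat{f}_t(\xi;x)=\mathbb{E}_{\pi_t}[f(\xi;x,\theta)]$, so the $\widehat{f}_t$ in the denominator of $G$ will cancel against this marginal when the outer expectation is expressed as an integral against $\widehat{f}_t$.

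First I would collapse the objective into a single integral over $\Xi$. Since $\mathbb{E}_{\pi_t}[|H(x,\theta)|]<\infty$, Fubini's theorem lets me interchange the $\theta$- and $\xi$-integrations, giving
\[
\mathbb{E}_{\pi_t}[H(x,\theta)] = \int_\Xi h(x,\xi)\,\widehat{f}_t(\xi;x)\,d\xi,
\]
because integrating $f(\xi;x,\theta)$ against $\pi_t$ produces $\widehat{f}_t(\xi;x)$ by definition. Next I would differentiate under the integral sign. Using \cref{ass: independent_DCT} (so $h$ is $C^1$ in $x$ with Lipschitz gradient) together with \cref{ass: dependent_DCT} (so $f(\xi;x,\theta)$, and hence the mixture $\widehat{f}_t(\xi;x)$, is $C^1$ in $x$), the map $x\mapsto h(x,\xi)\widehat{f}_t(\xi;x)$ is continuously differentiable, and the interchange of $\nabla_x$ with $\int_\Xi d\xi$ is justified by the dominated convergence theorem exactly as in \cref{lemma: independent_unbiasedness}. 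The product rule, together with $\nabla_x\widehat{f}_t(\xi;x)=\mathbb{E}_{\pi_t}[\nabla_x f(\xi;x,\theta)]$ (again an interchange of $\nabla_x$ with the $\pi_t$-expectation, valid under the same assumptions), then yields
\[
\nabla_x \mathbb{E}_{\pi_t}[H(x,\theta)] = \int_\Xi \left(\nabla_x h(x,\xi)\,\widehat{f}_t(\xi;x) + h(x,\xi)\,\nabla_x \widehat{f}_t(\xi;x)\right)d\xi.
\]

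Finally I would factor $\widehat{f}_t(\xi;x)$ out of the integrand by writing $h(x,\xi)\nabla_x\widehat{f}_t(\xi;x)=h(x,\xi)\frac{\nabla_x\widehat{f}_t(\xi;x)}{\widehat{f}_t(\xi;x)}\widehat{f}_t(\xi;x)$, which is legitimate because $\widehat{f}_t$ is a mixture of densities and so is strictly positive on the support of $\xi$. This recovers
\[
\nabla_x \mathbb{E}_{\pi_t}[H(x,\theta)] = \int_\Xi \left(\nabla_x h(x,\xi) + h(x,\xi)\frac{\nabla_x\widehat{f}_t(\xi;x)}{\widehat{f}_t(\xi;x)}\right)\widehat{f}_t(\xi;x)\,d\xi = \mathbb{E}_{\widehat{f}_t(\cdot;x)}[G(x,\xi)].
\]
Since the marginal law of $\xi$ under $\theta\sim\pi_t,\ \xi\sim f(\cdot;x,\theta)$ is precisely $\widehat{f}_t(\cdot;x)$, the right-hand side equals $\mathbb{E}_{\theta\sim\pi_t,\,\xi\sim f(\cdot;x,\theta)}[G(x,\xi)]$, which establishes unbiasedness at $x=x_t$.

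The main obstacle I anticipate is the domination step: unlike the decision-independent case, where only $\nabla_x h$ must be controlled, here the differentiation under the integral requires simultaneously dominating both $\nabla_x h(x,\xi)\widehat{f}_t(\xi;x)$ and $h(x,\xi)\nabla_x\widehat{f}_t(\xi;x)$ by an integrable function uniformly in a neighborhood of $x$. This forces me to combine the Lipschitz-gradient smoothness of $h$ with the smoothness and integrability of the mixture density $\widehat{f}_t$ and of its $x$-derivative; the discreteness of $\Theta$ assumed for the convergence analysis is convenient here, as it reduces $\widehat{f}_t$ and $\nabla_x\widehat{f}_t$ to finite $\pi_t$-weighted sums of the $C^1$ densities $f(\cdot;x,\theta_i)$ and their gradients, making the required dominating bound straightforward to assemble.
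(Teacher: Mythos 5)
Your proposal is correct and follows essentially the same argument as the paper's derivation in the appendix: both rely on the product rule, differentiation under the integral sign justified by the dominated convergence theorem, a Fubini--Tonelli interchange to identify $\nabla_x\widehat{f}_t$, and the multiply-and-divide by $\widehat{f}_t$ step combined with recognizing that the marginal law of $\xi$ under the two-stage draw is exactly $\widehat{f}_t(\cdot;x)$. The only difference is cosmetic ordering—you collapse the $\theta$-integration into the mixture density first and then differentiate, whereas the paper differentiates inside the $\pi_t$-expectation first and applies Fubini only to the second term—so the two proofs are the same in substance.
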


The detailed derivation can be found in \cref{appendix: C}. Note that in performative prediction literature (e.g. \cite{Dmitriy2020}), the gradient estimator is also derived using the chain rule similar to \cref{eq: dependent gradient estimator}. However, due to the difficulty in estimating the second term, most of the literature in performative prediction focus only on the first term, and show that under the biased gradient estimator, the solution converges to a so-called performative stable point which is in general different from the true optimal solution. In contrast, our approach provides a Bayesian way to estimate the second term under the parametric assumption and aims to converge to the true optimal solution of problem \cref{eq: obj-dependent}.

A final set of assumption on the step size to show the convergence of \cref{algorithm: dependent} is listed below.

\begin{assumption} The step size $a_t$ satisfies $\sum_{t=1}^{\infty} a_{t}=\infty, \lim_{t \rightarrow \infty} a_{t}=0, a_{t}>0, \forall t > 0$.
\label{ass: dependent}
\end{assumption}

We then have the following theorem showing the weak convergence of \cref{algorithm: dependent}.

\begin{theorem}
Let $\mathcal{D}^d[0,\infty)$ be the space of $\mathbb{R}^d$-valued operators which are right continuous and have left-hand limits for each dimension. Under \cref{ass: independent_DCT}, \cref{ass: consistency}, \cref{ass:linear_independence}, \cref{ass: compact space}, \cref{ass: dependent_DCT} and \cref{ass: dependent}, for each subsequence of $\{X^n(\cdot)\}_n$, there exists a further subsequence  $\{X^{n_k}(\cdot)\}_{n_k}$ and a process $X^*(\cdot)$ such that $X^{n_k}(\cdot)\Rightarrow X^*(\cdot)$ in the weak sense as $t\rightarrow\infty$ in the space $D^d[0,\infty)$, where $X^*(\cdot)$ satisfies the following ODE:
\begin{align}\label{ODE_dependent}
\dot{X}= -\nabla H(X,\theta^c)+z, ~z\in -\mathcal{C}(X), \quad X(1)=x_1,
\end{align}
where $\mathcal{C}(X)$ is the Clarke's normal cone to $\mathcal{X}$, i.e., for any $x \in \mathcal{X}$, $\mathcal{C}(x) = \{c: c^{T}x \geq c^{T}y, \forall y \in \mathcal{C}\}$. $z$ is the projection term: it is the vector of shortest Euclidean length needed to keep the trajectory of the ODE $X(\cdot)$ from leaving the decision space $\mathcal{X}$. Let $L_{\mathcal{X}}$ be the set of limit points of \cref{ODE_dependent} in $\mathcal{X}.$ Then there exist $\mu_n\rightarrow 0$ and $T_n\rightarrow \infty$ such that
$$\lim_n P\left\{\sup_{t\leq T_n} \text{Dist}\left(X^n(t),L_{\mathcal{X}}\right)\geq \mu_n\right\} = 0,$$
where $\text{Dist}(x,\mathcal{E}) = \inf_{y\in \mathcal{E}} \|x-y\|_2$ for any set $\mathcal{E}$ and point $x\in \mathcal{X}.$ The sequence $\{x_t\}_t$ in \cref{eq: SGD_dependent_2} also converges weakly to the limit set of the ODE \cref{ODE_dependent}.
\label{theorem: ODE}
\end{theorem}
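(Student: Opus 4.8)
The plan is to recast the recursion \cref{eq: SGD_dependent_2} into the canonical form of constrained stochastic approximation and invoke the weak-convergence (ODE) machinery of \cite{Kushner2003} (Chapter 8). Unlike the w.p.1 statement of \cref{thm: independent}, the step sizes here (\cref{ass: dependent}) are only required to satisfy $\sum_t a_t=\infty$ and $a_t\to 0$, with no $\sum_t a_t^2<\infty$; this weaker condition is precisely what forces the conclusion to hold only in the weak sense. Writing $g_t$ for the stochastic gradient inside \cref{eq: SGD_dependent_2}, I would use the decomposition
\begin{equation*}
    g_t = \nabla_x H(x_t,\theta^c) + \beta_t + \delta M_t,
\end{equation*}
where $\beta_t := \mathbb{E}[g_t\mid\mathcal{F}_t]-\nabla_x H(x_t,\theta^c)$ is a bias term and $\delta M_t := g_t-\mathbb{E}[g_t\mid\mathcal{F}_t]$ is a martingale-difference noise, so that the mean drift is $-\nabla_x H(x_t,\theta^c)$, matching \cref{ODE_dependent}.

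First I would identify the conditional mean. By \cref{lemma: dependent_unbiasedness}, $\mathbb{E}[g_t\mid\mathcal{F}_t]=\nabla_x\mathbb{E}_{\pi_t}[H(x_t,\theta)]=\mathbb{E}_{\pi_t}[\nabla_x H(x_t,\theta)]$, the interchange being justified by \cref{ass: independent_DCT} and \cref{ass: dependent_DCT}; hence
\begin{equation*}
    \beta_t = \mathbb{E}_{\pi_t}[\nabla_x H(x_t,\theta)]-\nabla_x H(x_t,\theta^c).
\end{equation*}
Since $\Theta$ is discrete and $\pi_t\Rightarrow\delta_{\theta^c}$ w.p.1 by \cref{lemma: pi_consistency}, the posterior mass concentrates on $\theta^c$; combining this with the continuity of $\nabla_x H(x,\cdot)$ and the compactness of $\mathcal{X}$ (\cref{ass: compact space}) yields $\sup_{x\in\mathcal{X}}\|\beta_t\|\to 0$ w.p.1, in direct parallel to the $\epsilon_t\to 0$ step in the proof of \cref{thm: independent}.

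Next I would verify the remaining hypotheses of the weak-convergence theorem for projected stochastic approximation in \cite{Kushner2003}. The drift $x\mapsto-\nabla_x H(x,\theta^c)$ is continuous by \cref{ass: independent_DCT}, and uniform integrability of $\{g_t\}$ follows from the compactness of $\mathcal{X}$ together with the $C^1$-smoothness of $h$ and $f$, provided the correction-term ratio $\nabla_x\widehat f_t/\widehat f_t$ remains bounded. To control the denominator I would use $\widehat f_t(\cdot;x)=\sum_\theta\pi_t(\theta)f(\cdot;x,\theta)\geq\pi_t(\theta^c)f^*(\cdot;x)$, which, by \cref{lemma: consistency} and \cref{lemma: pi_consistency}, stays bounded away from zero since $\pi_t(\theta^c)\to 1$ and $\widehat f_t\to f^*>0$. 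With these estimates, the step-size conditions of \cref{ass: dependent}, the vanishing bias, and the martingale-difference noise in place, the interpolated processes $\{X^n(\cdot)\}$ are tight in $\mathcal{D}^d[0,\infty)$, every weak limit $X^*(\cdot)$ solves the projected ODE \cref{ODE_dependent} with Clarke normal-cone reflection term $z$, and the iterates concentrate on the limit set $L_{\mathcal{X}}$ in the stated probabilistic form.

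I expect the main obstacle to be the noise and bias analysis of the decision-dependent correction term rather than the ODE machinery itself, because the consistency of $\pi_t$ under correlated, non-stationary data is already supplied by \cref{lemma: pi_consistency}. The difficulty is that $\xi_t$ has a law that changes with $t$ and that the correction term $h(x_t,\xi_t)\,\nabla_x\widehat f_t(\xi_t;x_t)/\widehat f_t(\xi_t;x_t)$ depends on the time-varying posterior through $\widehat f_t$; establishing uniform integrability and the uniform-in-$x$ decay of $\beta_t$ therefore requires threading the posterior consistency through the ratio $\nabla_x\widehat f_t/\widehat f_t$ and keeping its denominator bounded below. Once these moment estimates are secured, the tightness argument and the martingale characterization of the weak limit follow from the standard weak-convergence framework, and the absence of the $\sum_t a_t^2<\infty$ condition is exactly what restricts the conclusion to weak convergence.
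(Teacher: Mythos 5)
Your proposal is correct and follows the same skeleton as the paper's proof: rewrite \cref{eq: SGD_dependent_2} as drift $-\nabla_x H(x_t,\theta^c)$ plus bias plus martingale-difference noise plus projection, show the bias vanishes w.p.1 using posterior consistency, and then invoke the Kushner--Yin weak-convergence theorem (the paper cites Theorem 7.2.1 of \cite{Kushner2003}); your observation that the missing $\sum_t a_t^2<\infty$ condition is what restricts the conclusion to weak convergence also matches the paper's reasoning. Where you diverge is in how the bias is killed. You treat the bias as a single term $\beta_t=\mathbb{E}_{\pi_t}[\nabla_x H(x_t,\theta)]-\nabla_x H(x_t,\theta^c)$ and dispose of it entirely through \cref{lemma: pi_consistency} (posterior concentration on $\theta^c$, hence total-variation convergence on the discrete $\Theta$). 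The paper instead splits the bias into $\beta_{t,1}$ (from the $\nabla_x h$ part) and $\beta_{t,2}$ (from the score-correction part) and proves them to vanish by two different mechanisms: $\beta_{t,2}\to 0$ uses \cref{lemma: pi_consistency} exactly as you do, but $\beta_{t,1}\to 0$ (\cref{proposition: bias vanish}) is established \emph{without} the identifiability assumption \cref{ass:linear_independence}, using only the K-L result of \cref{lemma: consistency} --- which controls $D_{KL}(f^*(\cdot;x_{t+1})\|\hat f_t(\cdot;x_{t+1}))$ at the \emph{next} decision --- together with a three-term triangle inequality, Lipschitz continuity of the densities in $x$, and $a_t\to 0$ to bridge the mismatch between $x_t$ and $x_{t+1}$.

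Both routes are legitimate under the theorem's hypotheses (which include \cref{ass:linear_independence} anyway), so your unified treatment is a genuine simplification in this context. What the paper's finer split buys is twofold: it isolates exactly which part of the bias needs identifiability (only the score-correction term does), and, by working at the level of the mixture densities $\hat f_t$ and $f^*$ rather than expanding $\beta_t=\sum_\theta(\pi_t(\theta)-\delta_{\theta^c}(\theta))\nabla_x H(x_t,\theta)$, it avoids requiring a bound on $\|\nabla_x H(x,\theta)\|$ that is uniform over a possibly countably infinite $\Theta$, which your expansion implicitly needs. One more point in your favor: you explicitly flag uniform integrability of the noise and sketch the lower bound $\hat f_t(\cdot;x)\geq \pi_t(\theta^c)f^*(\cdot;x)$ to control the correction ratio; the paper's proof passes over this verification silently when it appeals to \cite{Kushner2003}, so your discussion fills a gap the paper leaves implicit rather than contradicting it.
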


\begin{remark}
\cref{theorem: ODE} shows the weak convergence of \cref{algorithm: dependent}. The SGD iterates specified in \cref{eq: SGD_dependent} approaches the solution trajectory of the ODE \cref{ODE_dependent} and eventually converges to a limit point of the ODE, which is a point $x^*$ satisfying $\nabla H(x^*,\theta^c) = 0$ if the point is in the interior of $\mathcal{X}$. Hence, such a point is a stationary point of problem \cref{eq: obj-dependent} for the decision-dependent case and can be a local optimal solution if it is stable. The weak convergence result implies that once the trajectory enters the domain of attraction of a local optimal solution, the chance of escaping from it goes to 0 in the limit.
\end{remark}

Now we prove \cref{theorem: ODE} below. 
\begin{proof}
Recall that at time $t+1$, \cref{algorithm: dependent} takes the following update
\begin{equation*}
    x_{t+1} = x_t - a_t \left(\nabla_x h(x_t,\xi_t)+h(x_t,\xi_t)\frac{
       \nabla_x\widehat{f}_{t}\left(\xi_{t} ; x_{t}\right)}{ \widehat{f}_{t}\left(\xi_{t} ; x_{t}\right)}\right)+a_t z_t.
\end{equation*}
From the derivation of unbiased gradient estimator in \cref{appendix: C}, we have
\begin{align*}
    & \hspace{0.4cm} \mathbb{E}_{\pi_t}\left[\mathbb{E}_{f(\cdot;x_t,\theta)}\left[\nabla_x h(x_t,\xi)\right]\right] \\
    & = \mathbb{E}_{\hat{f}_t(\cdot;x_t)}\left[\nabla_x h(x_t,\xi)\right] \\
    &=  \mathbb{E}_{f^*(\cdot;x_t)}\nabla_x h(x_t,\xi) + \left(\mathbb{E}_{\hat f_t(\cdot;x_t)}[\nabla_x h(x_t,\xi)]- \mathbb{E}_{f^*(\cdot;x_t)}[\nabla_x h(x_t,\xi)]\right)\\
	&=  \mathbb{E}_{f^*(\cdot;x_t)}[\nabla_x h(x_t,\xi)] +\beta_{t,1},
\end{align*}
where $f^*(\cdot;x) =  f(\cdot;x,\theta^c)$ for $x \in \mathcal{X}$, $\beta_{t,1} = \mathbb{E}_{\hat f_t(\cdot;x_t)}[\nabla_x h(x_t,\xi)]- \mathbb{E}_{f^*(\cdot;x_t)}[\nabla_x h(x_t,\xi)]$. Similarly, we have
\begin{align*}
    & \hspace{0.4cm} \mathbb{E}_{\pi_t}\left[\mathbb{E}_{f(\cdot;x_t,\theta)}\left[h(x_t,\xi)\frac{\nabla_x\widehat{f}_{t}\left(\xi; x_{t}\right)}{\widehat{f}_{t}\left(\xi; x_{t}\right)}\right]\right] \\
    & = \int_\Xi h(x_t,\xi)\nabla_x\widehat{f}_{t}\left(\xi; x_{t}\right)d \xi_t\\
	& = \int_\Xi h(x_t,\xi_t)\nabla_x f^*(\xi;x_t) d \xi +\left ( \int_\Xi h(x_t,\xi)\nabla_x\widehat{f}_{t}\left(\xi; x_{t}\right) d \xi - \int_\Xi h(x_t,\xi)\nabla_x f^*(\xi;x_t) d \xi \right)\\
	& = \int_\Xi h(x_t,\xi)\nabla_x f^*(\xi;x_t) d \xi +\beta_{t,2},
\end{align*}
where $\beta_{t,2} = \int_\Xi h(x_t,\xi)\nabla_x\widehat{f}_{t}\left(\xi ; x_{t}\right) d \xi - \int_\Xi h(x_t,\xi)\nabla_x f^*(\xi;x_t) d \xi$. Note that
\begin{equation*}
    \int_\Xi h(x_t,\xi)\nabla_x f^*(\xi;x_t) d \xi + \mathbb{E}_{f^*(\cdot;x_t)}[\nabla_x h(x_t,\xi)]=\nabla_x H(x,\theta^c),
\end{equation*}
and we can rewrite the update as 
\begin{equation*}
    x_{t+1} = x_t-a_t  \nabla_x H(x_t,\theta^c) -a_t\beta_{t,1}-a_t\beta_{t,2}-a_t \delta M_t+a_t z_t,
\end{equation*}
where 
\begin{align*}
    \delta M_t = \nabla_x h(x_t,\xi_t)+h(x_t,\xi_t) \frac{\nabla_x\widehat{f}_{t}\left(\xi_t; x_{t}\right)}{ \widehat{f}_{t}\left(\xi_t; x_{t}\right)} - \mathbb{E}_{ \hat{f}_t(\cdot;x_t)} \left[\nabla_x h(x_t,\xi) + h(x_t,\xi) \frac{\nabla_x \hat{f}(\xi;x_t)}{\hat{f}(\xi;x_t)}\right]
\end{align*}
is a martingale difference sequence. Suppose that we can show $\lim_{t\rightarrow \infty} \beta_{t,1} = 0$ w.p.1 $(\mathbb{P}_{\theta^c}^{\infty})$ and $\lim_{t\rightarrow \infty} \beta_{t,2} = 0$ w.p.1 $(\mathbb{P}_{\theta^c}^{\infty})$, then the rest of the update is exactly the discretization of ODE \cref{ODE_dependent}. Then \cref{theorem: ODE} is proved by a straightforward application of Theorem 7.2.1 in \cite{Kushner2003}. We conclude the proof with the following two lemmas showing that the two bias terms $\beta_{t,1}$ and $\beta_{t,2}$ vanish in the limit. 

\begin{lemma}
Under \cref{ass: independent_DCT}, \cref{ass: consistency}, \cref{ass: dependent_DCT} and \cref{ass: dependent}, we have $\lim_{t\rightarrow \infty} \beta_{t,1} = 0$ w.p.1 $(\mathbb{P}_{\theta^c}^{\infty})$.
\label{proposition: bias vanish}
\end{lemma}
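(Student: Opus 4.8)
The plan is to reduce the claim to the posterior consistency already established in \cref{lemma: pi_consistency}. The conceptual obstacle throughout is that the iterate $x_t$ and the posterior $\pi_t$ are both measurable functions of the same data stream and therefore move together, so one cannot treat them independently; the strategy is to bound $\beta_{t,1}$ by a quantity that \emph{decouples} the two, leaving convergence to follow purely from the concentration of $\pi_t$ on $\theta^c$.

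First I would rewrite $\beta_{t,1}$ in a transparent form. Since $\hat f_t(\cdot;x)=\sum_\theta \pi_t(\theta) f(\cdot;x,\theta)$ and expectation is linear, writing $G(x,\theta):=\mathbb{E}_{f(\cdot;x,\theta)}[\nabla_x h(x,\xi)]$ gives $\mathbb{E}_{\hat f_t(\cdot;x_t)}[\nabla_x h(x_t,\xi)]=\sum_\theta \pi_t(\theta)G(x_t,\theta)$ and $\mathbb{E}_{f^*(\cdot;x_t)}[\nabla_x h(x_t,\xi)]=G(x_t,\theta^c)$, so using $\sum_\theta \pi_t(\theta)=1$,
\begin{align*}
\beta_{t,1}=\sum_\theta \pi_t(\theta)\big(G(x_t,\theta)-G(x_t,\theta^c)\big)=\sum_{\theta\ne\theta^c}\pi_t(\theta)\big(G(x_t,\theta)-G(x_t,\theta^c)\big),
\end{align*}
the $\theta=\theta^c$ term dropping out. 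Interchanging $\sum_\theta$ with the $\xi$-integral here is justified by \cref{ass: independent_DCT} and \cref{ass: dependent_DCT}, exactly as in the derivation of \cref{appendix: C}.

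Next I would invoke consistency and a uniform bound. By \cref{lemma: pi_consistency}, $\pi_t\Rightarrow\delta_{\theta^c}$ w.p.1; since $\Theta$ is discrete, the indicator $\mathbf 1_{\{\theta^c\}}$ is bounded and continuous, so weak convergence yields $\pi_t(\theta^c)\to 1$ and hence $\sum_{\theta\ne\theta^c}\pi_t(\theta)=1-\pi_t(\theta^c)\to 0$ w.p.1. I would then establish the deterministic bound $M:=\sup_{x\in\mathcal X,\,\theta\in\Theta}\|G(x,\theta)-G(x,\theta^c)\|_2<\infty$, using that $G(\cdot,\theta)$ is continuous on the compact set $\mathcal X$ (by \cref{ass: independent_DCT}, \cref{ass: dependent_DCT}, and dominated convergence) together with \cref{ass: compact space} and the $L_h$-Lipschitz control on $\nabla_x h$. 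With $M<\infty$ the coupling is removed:
\begin{align*}
\|\beta_{t,1}\|_2\le\sum_{\theta\ne\theta^c}\pi_t(\theta)\,\|G(x_t,\theta)-G(x_t,\theta^c)\|_2\le M\big(1-\pi_t(\theta^c)\big)\longrightarrow 0\quad\text{w.p.1},
\end{align*}
which is the assertion.

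The hard part is the uniform bound $M$, precisely because $x_t$ is random and correlated with $\pi_t$: one cannot apply weak convergence directly to the moving test function $G(x_t,\cdot)$. Bounding uniformly in $x$ via compactness of $\mathcal X$ is what replaces the moving argument by the fixed scalar $1-\pi_t(\theta^c)$, which then vanishes by consistency. The single delicate point is uniformity over $\theta$ when $\Theta$ is countably infinite, where a per-$\theta$ bound does not suffice and one must use the Lipschitz/integrability structure to keep $\sup_\theta$ finite; for finite $\Theta$ this is automatic from joint compactness of $\mathcal X\times\Theta$ and continuity of $G$.
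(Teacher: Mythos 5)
There is a genuine gap: your proof invokes \cref{lemma: pi_consistency} (posterior consistency, $\pi_t \Rightarrow \delta_{\theta^c}$), but that proposition requires \cref{ass:linear_independence} (linear independence of the densities) and \cref{ass: compact space} (compactness of $\mathcal{X}$), neither of which appears in the hypotheses of \cref{proposition: bias vanish}. The lemma is stated only under \cref{ass: independent_DCT}, \cref{ass: consistency}, \cref{ass: dependent_DCT} and \cref{ass: dependent}, and this is deliberate: without identifiability the posterior need not concentrate on $\theta^c$ at all. For example, if some $\theta' \neq \theta^c$ produces densities $f(\cdot;x,\theta')$ that coincide with $f(\cdot;x,\theta^c)$ along the visited trajectory, the posterior mass can split between $\theta'$ and $\theta^c$ indefinitely, so your key quantity $1-\pi_t(\theta^c)$ does \emph{not} tend to $0$. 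Your crude bound $\|\beta_{t,1}\|_2 \leq M\bigl(1-\pi_t(\theta^c)\bigr)$ discards exactly the information that such spurious mass sits on parameters whose densities agree with $f^*$ where it matters, and therefore the final step of your argument is unavailable under the lemma's stated assumptions. A telling symptom is that your proof never uses \cref{ass: dependent} (step sizes $a_t \to 0$), which the lemma lists for a reason.

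The paper's proof works at the level of the \emph{predictive} density rather than the posterior: it bounds $|\beta_{t,1}| \leq L_h' \int_\Xi |f^*(\xi;x_t)-\hat f_t(\xi;x_t)|\,d\xi$, then splits this by the triangle inequality through the point $x_{t+1}$, because the available quantity from \cref{lemma: consistency} is the K--L divergence $d_t = D_{KL}\bigl(f^*(\cdot;x_{t+1})\,\|\,\hat f_t(\cdot;x_{t+1})\bigr)$ evaluated at the \emph{next} decision. The middle term $\int_\Xi |f^*(\xi;x_{t+1})-\hat f_t(\xi;x_{t+1})|\,d\xi$ is controlled by $d_t \to 0$ via Pinsker's inequality, and \cref{lemma: consistency} holds without any identifiability assumption — it is a pure martingale/KL argument. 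The two shift terms $|f^*(\xi;x_t)-f^*(\xi;x_{t+1})|$ and $|\hat f_t(\xi;x_t)-\hat f_t(\xi;x_{t+1})|$ are handled by Lipschitz continuity of the densities in $x$ together with $|x_t - x_{t+1}| = O(a_t) \to 0$, which is where \cref{ass: dependent} enters. If you wanted to salvage your decoupling strategy, you would have to either add \cref{ass:linear_independence} and \cref{ass: compact space} to the hypotheses (proving a strictly weaker lemma than the paper's), or replace the bound $1-\pi_t(\theta^c)$ by a quantity that, like $d_t$, measures closeness of $\hat f_t$ to $f^*$ rather than concentration of $\pi_t$ on $\theta^c$.
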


\begin{lemma}
Under \cref{ass: independent_DCT}, \cref{ass: consistency}, \cref{ass:linear_independence} and \cref{ass: dependent_DCT}, we have $\lim_{t\rightarrow \infty} \beta_{t,2} = 0$ w.p.1 $(\mathbb{P}_{\theta^c}^{\infty})$.
\label{proposition: consistency}
\end{lemma}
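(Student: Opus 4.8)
The plan is to reduce $\beta_{t,2}$ to a posterior average of a fixed kernel and then invoke the posterior consistency already established in \cref{lemma: pi_consistency}. First I would exploit the mixture structure $\hat f_t(\cdot;x) = \sum_\theta \pi_t(\theta) f(\cdot;x,\theta)$ afforded by the discrete parameter space (\cref{ass: consistency}). Since $f(\xi;x,\theta)$ is $C^1$ in $x$ (\cref{ass: dependent_DCT}) and the weights $\pi_t(\theta)$ do not depend on $x$, the gradient passes through the (possibly countable) sum, giving $\nabla_x \hat f_t(\xi;x) = \sum_\theta \pi_t(\theta)\nabla_x f(\xi;x,\theta)$. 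Substituting into the definition of $\beta_{t,2}$ and introducing the kernel $G(x,\theta) := \int_\Xi h(x,\xi)\nabla_x f(\xi;x,\theta)\,d\xi$, linearity of the integral yields $\beta_{t,2} = \sum_\theta \pi_t(\theta) G(x_t,\theta) - G(x_t,\theta^c) = \mathbb{E}_{\pi_t}[G(x_t,\theta)] - G(x_t,\theta^c)$. At this point I would pause to justify that $G$ is well-defined and that the interchanges of $\int_\Xi$ with $\nabla_x$ and with the sum over $\theta$ are legitimate, using the $C^1$/Lipschitz hypotheses on $h$ in \cref{ass: independent_DCT} together with \cref{ass: dependent_DCT} and a dominated-convergence (Fubini) argument.

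Next I would isolate the $\theta^c$ term. Since $\sum_\theta \pi_t(\theta) = 1$, we may rewrite $\beta_{t,2} = \sum_{\theta \neq \theta^c}\pi_t(\theta)\bigl(G(x_t,\theta) - G(x_t,\theta^c)\bigr)$, whence $\|\beta_{t,2}\|_2 \le \bigl(\sum_{\theta \neq \theta^c}\pi_t(\theta)\bigr)\cdot 2M = 2M\,(1 - \pi_t(\theta^c))$, provided the uniform bound $M := \sup_{x \in \mathcal{X},\,\theta \in \Theta}\|G(x,\theta)\|_2 < \infty$ holds. The conclusion is then immediate: \cref{lemma: pi_consistency} gives $\pi_t \Rightarrow \delta_{\theta^c}$ w.p.1, which on a discrete space is equivalent (via the clopen singleton $\{\theta^c\}$ and the Portmanteau theorem) to $\pi_t(\theta^c) \to 1$ w.p.1 $(\mathbb{P}_{\theta^c}^\infty)$; hence $1 - \pi_t(\theta^c) \to 0$ along almost every sample path and $\beta_{t,2} \to 0$ w.p.1.

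The main obstacle I anticipate is establishing the uniform bound $M < \infty$ over both $x \in \mathcal{X}$ and the (possibly countably infinite) set $\Theta$. For finite $\Theta$ this is routine: each map $x \mapsto G(x,\theta)$ is continuous on the compact set $\mathcal{X}$ (\cref{ass: compact space}), hence bounded, and there are only finitely many $\theta$. For countably infinite $\Theta$ the bound is not automatic, and I would instead introduce a dominating envelope $b(\theta) \ge \sup_{x \in \mathcal{X}}\|G(x,\theta)\|_2$ and show that the tail mass $\sum_{\theta \neq \theta^c}\pi_t(\theta)\,b(\theta)$ vanishes, which calls for either a summable envelope or a uniform-integrability condition extracted from the smoothness hypotheses and the compactness of $\mathcal{X}$. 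It is worth contrasting this with the companion bound $\beta_{t,1}$: there only the densities (not their $x$-gradients) appear, so total-variation convergence obtained from \cref{lemma: consistency} via Pinsker's inequality together with boundedness of $\nabla_x h$ suffices, and \cref{ass:linear_independence} is not needed. Here the presence of $\nabla_x f$ rules out the total-variation route and forces the stronger weak convergence $\pi_t \Rightarrow \delta_{\theta^c}$, which is precisely why \cref{ass:linear_independence} enters through \cref{lemma: pi_consistency}.
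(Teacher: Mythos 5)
Your proposal is correct and follows essentially the same route as the paper: expand $\nabla_x\widehat{f}_t - \nabla_x f^*$ as a posterior-weighted sum over $\theta$, bound the integral kernel $\int_\Xi h(x,\xi)\nabla_x f(\xi;x,\theta)\,d\xi$ uniformly in $(x,\theta)$ (the paper does this by splitting $|h|\cdot|\nabla_x f|\le |h|\,L_f'$ and using $|U_h|\cdot L_f'$, you by taking $M=\sup\|G\|$ directly), and conclude from \cref{lemma: pi_consistency}, noting that your bound $2M(1-\pi_t(\theta^c))$ is exactly the paper's $|U_h|L_f'\sum_{\theta}|\pi_t(\theta)-\delta_{\theta^c}(\theta)|$ in disguise. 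Your caveat that the uniform kernel bound over a countably infinite $\Theta$ requires an envelope or uniform-integrability condition is, if anything, more careful than the paper, which asserts the uniform constants $U_h$ and $L_f'$ directly from the $C^1$ hypotheses without addressing uniformity over $\Theta$.
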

See \cref{appendix: F} and \cref{appendix: G} for the detailed proofs of the above two lemmas. 
\end{proof}

Finally, we summarize main similarities and differences between decision-independent and decision-dependent cases below. Both cases require a compact and convex decision space $\mathcal{X}$ and smoothness of the objective function $h(x,\xi)$ in $x$. For the decision-dependent case, we also require the density function $f(\xi;x,\theta)$ to be smooth in $x$, since the gradient estimator of the objective function in \eqref{eq: dependent} involves the gradient of $f(\xi;x,\theta)$; and moreover, we assume linear independence between densities in order to show the consistency of the posterior distribution with non i.i.d. decision-dependent data. For the decision-independent case, we further impose some stronger conditions in order to show stronger results, including the finiteness of the parameter space $\Theta$ to show the convergence rate, and stricter stepsize assumption to show the strong convergence of the solution sequence to the limit set of the ODE.

\section{Numerical experiments}
\label{sec: experiment}
\subsection{Synthetic test problems}
We first demonstrate the performance of \cref{algorithm: independent} and \cref{algorithm: dependent} on two synthetic test problems in a univariate setting and in a multivariate setting, respectively. Our method is abbreviated as Bayesian-SGD.
\subsubsection{Decision-independent uncertainty}
We first carry out numerical experiments on a simple quadratic problem in a univariate setting: $h(x,\xi)=(x-5)^2 + 0.5 \xi x$, where $\xi \sim \mathcal{N}(\theta^c,\sigma^2)$.  The parameter values are as follows: $\sigma=4$, $\theta^c=9$, $D=1$, $K=1$, $\Theta=\{1,2,\cdots,20\}$, $a_t=\frac{2}{t+5}$. It is easy to check $H(x,\theta^c) =x^2 -5.5x + 25$, and the true optimal decision is taken at $x^{*}=2.75$. At each time $t$, the gradient estimator in \cref{algorithm: independent} is $\nabla_x h(x_t,\xi_t) = 2x_t-10+0.5\xi$. In \cref{algorithm: independent}, we use the uniform distribution on $\Theta$ as the prior distribution and set the initial solution $x_1=0$. 

As a benchmark, we assume the true parameter $\theta^c$ is known and use the plain SGD algorithm on the true problem~\eqref{eq: obj-independent}. Obviously, with the knowledge of the true parameter value this algorithm should provide a lower bound on the objective value that can be achieved. We also compare with the MLE method (cf. \cite{Song2019a}), which uses the maximum likelihood estimator $\hat{\theta_t}$ at each time stage to replace the unknown $\theta^c$ in the objective function \cref{eq: obj-independent} and then solves the corresponding optimization problem by SGD.  For fair comparison, we use the same number of SGD iterations at each time stage for all three algorithms. We run all three algorithms (\cref{algorithm: independent}, benchmark, MLE) for 100 times on the problem. The mean and standard deviation of the solution error $|x_t - x^{*}|$ over time are shown in \cref{fig:decision_independent_univariate}. The observations from \cref{fig:decision_independent_univariate} can be summarized as follows.
\begin{itemize}
    \item With decreasing step size, the solution sequence in \cref{algorithm: independent} converges to the true optimal solution.
    \item The benchmark algorithm (without parameter uncertainty) performs better than the proposed algorithm and the MLE algorithm, but in the long run (e.g. $t>1000$ to be shown in multivariate setting) the three algorithms behave similarly.
    \item In the initial time stages our algorithm performs slightly better than the MLE algorithm. This is  due to the better estimation of the objective function by the Bayesian average in our algorithm than the point estimate in the MLE algorithm, when the data are limited.  
\end{itemize}

\begin{figure}[!ht]
{
\centering
\includegraphics[width=1\textwidth]{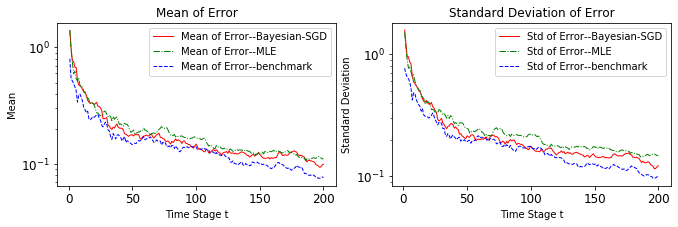}
\caption{Mean and standard deviation of $|x_t - x^{*}|$ of 100 runs of \cref{algorithm: independent} (Bayesian-SGD), MLE, and benchmark algorithm in an univariate example.}
\label{fig:decision_independent_univariate}
}
\end{figure}

We then carry out numerical experiments on a quadratic problem in a multivariate setting: $h(x,\xi)=
(x_1- 1)^2+(x_2-2)^2 + \xi (x_1+x_2)$, where $\xi$ follows an exponential distribution with mean $\theta^c$. The parameter values are as follows: $\theta^c=4$, $D=1$, $K=1$, $\Theta=\{1,2,\cdots,20\}$, $a_t=\frac{2}{t+5}$. It is easy to check $H(x,\theta^c)=(x_1+1)^2+x_2^2+4$, and the true optimal decision is taken at $x^{*}=(-1, 0)$. At each time $t$, the gradient estimator in \cref{algorithm: independent} is $\nabla_x h(x_t,\xi_t) = (2 x_1 -2 + \xi, 2 x_2 - 4 + \xi)$. We use the uniform  distribution on $\Theta$ as the prior distribution and set the initial solution $x_1=(5,5)$. We again run all three algorithms (\cref{algorithm: independent}, benchmark, MLE) for 100 times on the problem. The mean and standard deviation of the solution error $||x_t - x^{*}||_2$ over time are shown in \cref{fig:decision_independent_multivariate}, from which we can draw the same conclusion as the univariate setting.

\begin{figure}[!ht]
{
\centering
\includegraphics[width=1\textwidth]{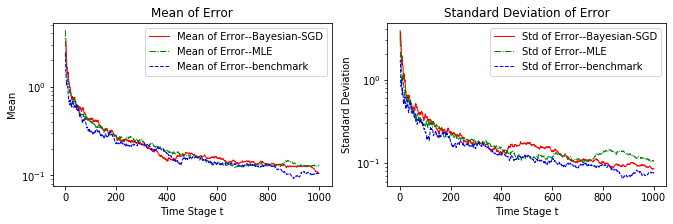}
\caption{Mean and standard deviation of $||x_t - x^{*}||_2$ of 100 runs of \cref{algorithm: independent} (Bayesian-SGD), MLE, and benchmark algorithm in a multivariate example.}
\label{fig:decision_independent_multivariate}
}
\end{figure}

\subsubsection{Decision-dependent uncertainty}
We carry out numerical experiments on a simple quadratic problem in a univariate setting: $h(x,\xi)=(x-5)^2 + 0.5 \xi x$, where $\xi \sim \mathcal{N}(x+\theta^c,\sigma^2)$.  The parameters are as follows: $\sigma=4$, $\theta^c=4$, $D=1$, $K=1$, $\Theta=\{1,2,\cdots,30\}$, $a_t=\frac{2}{t+5}$. It is easy to check $H(x,\theta) = (x-5)^2 + 0.5(x+\theta^c)x = 1.5x^2 -8x + 25$ and the true optimal decision is  $x^{*}=\frac{8}{3}$. The gradient estimator in \cref{algorithm: dependent} at each time $t$ is $\nabla_{x} h\left(x_{t}, \xi_{t}\right)+h\left(x_{t}, \xi_{t}\right) \frac{\nabla_x\widehat{f}_{t}\left(\xi_{t} ; x_{t}\right)}{ \widehat{f}_{t}\left(\xi_{t} ; x_{t}\right)}$, which can be computed as $(2x_t-10 + 0.5 \xi) + ((x_t-5)^2 + 0.5 \xi_t x_t)\frac{\sum_{\theta} \pi_t(\theta)\cdot \nabla_x f(\xi_t;x_t,\theta_t)}{\sum_{\theta} \pi_t(\theta) \cdot f(\xi_t;x_t,\theta_t)}$, where $f(\xi;x,\theta) =\frac{1}{\sqrt{2\pi}\sigma}\exp{\left(-\frac{(\xi-(x + \theta))^2}{2\sigma^2}\right)}$.

We use the uniform  distribution on $\Theta$ as the prior distribution and set the initial solution $x_1=0$. We run \cref{algorithm: dependent} and the benchmark algorithm for 100 times on the problem. Note that the MLE method in \cite{Song2019a} is not applicable for the decision-dependent case. The mean and standard deviation of the solution error $|x_t - x^{*}|$ over time are shown in \cref{fig:decision_dependent_univariate}. We further show the convergence of posterior distribution under different data batch size $D$ in \cref{fig:decision_dependent_posterior}. Note that the benchmark algorithm (without parameter uncertainty) can be viewed as \cref{algorithm: dependent} with $D = \infty$. The observations from \cref{fig:decision_dependent_univariate} and \cref{fig:decision_dependent_posterior} are summarized as follows.

\begin{itemize}
    \item With decreasing step size, the solution sequence in \cref{algorithm: dependent} converges to the true optimal solution.
    \item \cref{fig:decision_dependent_posterior} shows that as we observe more data at each time stage, the Bayesian posterior distribution converges faster to the delta function concentrated on the true parameter $\theta^c$.
    \item There is no significant difference in the convergence rate of \cref{algorithm: dependent} under different data batch sizes, even though the posterior distribution converges faster with larger data batch size. It implies that the Bayesian average of the objective function (\ref{eq: dependent}) in this example is a good estimate of the true objective function despite the inaccuracy of the posterior distribution at the beginning time stages.
\end{itemize}

\begin{figure}[!ht]
{
\centering
\includegraphics[width=1\textwidth]{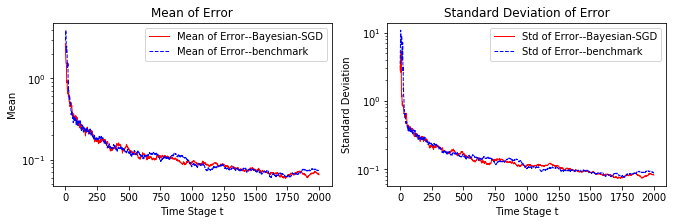}
\caption{Mean and standard deviation of $||x_t - x^{*}||_2$ of 100 runs of \cref{algorithm: dependent} (Bayesian-SGD) and  the benchmark algorithm in a univariate example.}
\label{fig:decision_dependent_univariate}
}
\end{figure}

\begin{figure}[!ht]
{
\centering
\includegraphics[width=1\textwidth]{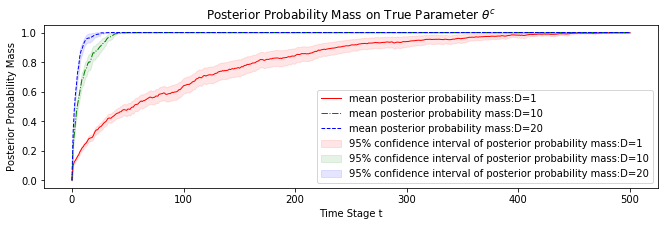}
\caption{Mean and $95\%$ confidence interval of $\pi_t(\theta^c)$ of 100 runs of \cref{algorithm: dependent} (Bayesian-SGD) under different data batch sizes.}
\label{fig:decision_dependent_posterior}
}
\end{figure}

We then carry out numerical experiments on a quadratic problem in a multivariate setting: $h(x,\xi)=(x_1-1)^2+(x_2-2)^2+\xi$, where $x=(x_1,x_2)$ and $\xi$ follows an exponential distribution with mean $(x_1-x_2)^2 + \theta^c$.  The parameters are  as follows: $\theta^c=4$, $D=1$, $K=1$, $\Theta=\{1,2,\cdots,20\}$, $a_t=\frac{2}{t+5}$. It is easy to check $H(x,\theta^c)=2 x_1^2 + 2  x_2^2 - 2 x_1  x_2 - 2 x_1 - 4  x_2 + 9$, and the true optimal decision is taken at $x^{*}=(\frac{4}{3}, \frac{5}{3})$. The gradient estimator in \cref{algorithm: dependent} can be computed as $(2 x_1-2, 2 x_2-4) + ((x_1-1)^2+(x_2-2)^2+\xi)\frac{\sum_{\theta} \pi_t(\theta)\cdot \nabla_x f(\xi_t;x_t,\theta_t)}{\sum_{\theta} \pi_t(\theta) \cdot f(\xi_t;x_t,\theta_t)}$. Recall that $f(\xi;x,\theta) = \frac{1}{(x_1-x_2)^2 + \theta} \exp{-\frac{\xi_t}{(x_1-x_2)^2 + \theta}}$. We use the uniform  distribution on $\Theta$ as the prior distribution and set the initial solution $x_1=(5,5)$. We run \cref{algorithm: dependent} and the benchmark algorithm for 100 times on the problem. The mean and standard deviation of the solution error $||x_t - x^{*}||_2$ over time are shown in \cref{fig:decision_dependent_multivariate}, from which we can draw the same conclusion as the univariate setting.

\begin{figure}[!ht]
{
\centering
\includegraphics[width=1\textwidth]{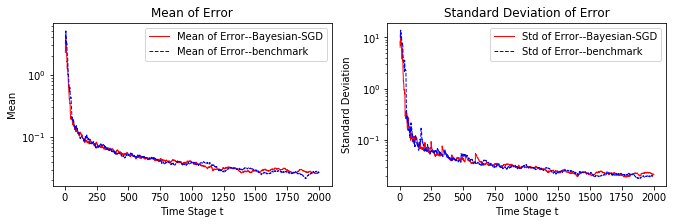}
\caption{Mean and standard deviation of $||x_t - x^{*}||_2$ of 100 runs of \cref{algorithm: dependent} (Bayesian-SGD) and the benchmark algorithm in a multivariate example.}
\label{fig:decision_dependent_multivariate}
}
\end{figure}

\subsection{Multi-item Newsvendor Problem}
We consider a multi-item newsvendor problem and its variant with decision-dependent uncertainty. In the multi-item newsvendor problem, there are $d=3$ different kinds of newspapers, and a newsboy orders $x \in \mathbb{R}^{d}_{\geq 0}$ units of newspapers to replenish the inventory at the beginning of a selling season. We assume $0 \leq x_i \leq M_i$, where $M_i$ is the inventory capacity for newspaper $i \in [d]$. During the selling season, the newsboy observes customer demands, which are observations of a random vector $\xi \in (-\infty, \infty)^{d}$ following an unknown joint distribution $F$. Negative demand implies that some customers may have bought the newspaper somewhere else and drop it off after reading. The cost of purchasing newspaper is $c$ per unit, and the selling price is $p$ per unit. At the end of the selling season the unsold newspaper has a salvage value of $s$ per unit. Note that $c, p, s$ are all 3-dimensional vectors. Also note that there is no replenishment of newspaper during the selling season. The cost function is given by $h(x,\xi) = c^{T}x - p^{T} \min (x, \max (0, \xi))  - s^{T} \max (0, x - \xi)$. Both $\min$ and $\max$ are element-wise operators. The newsboy aims to choose the amount $x$ that minimizes the expected cost, where the expectation is taken w.r.t. the distribution of $\xi$. 

\subsubsection{Decision-independent uncertainty}
We first consider the multi-item newsvendor problem with the decision-independent input uncertainty. We assume $\xi$ follows a multivariate normal distribution with mean $\theta^c_{\mu}$ and covariance matrix $\theta^c_{\Sigma}$. Note that in this problem we have 9 unknown parameters, i.e., 3 mean parameters $\theta^c_{\mu}$, 3 variance parameters and 3 correlation parameters $\theta^c_{\Sigma}:=(\theta^c_{\text{var}}, \theta^c_{\text{corr}})$. At each time $t$, the gradient estimator in \cref{algorithm: independent} is $\nabla_x h(x,\xi) = \left\{\begin{array}{ll} c-p, x \leq \xi \\ c - s, x > \xi \end{array}\right.$. The parameters are as follows: $\theta^c_{\mu}=(10, 15, 20)$, $\theta^c_{\text{var}}=(3,6,9)$, $\theta^c_{\text{corr}}=(0.1,0.3,0.5)$, thus the true covariance matrix is $((3, 0.42, 1.56), (0.42, 6, 3.67), (1.56, 3.67, 9))$; parameter space $\Theta_{\mu}=\{5, 10, 15, 20, 25\}^{3}$, 
\newline
$\Theta_{\text{var}}=\{1,3,6,9,12\}^{3}$, $\Theta_{\text{corr}}=\{0.1,0.2,0.3,0.4,0.5\}^{3}$; $D=2$, $K=1$, $M=(100,100,100)$, $c=(2,4,6)$, $p=(4,6,8)$, $s=(1,2,3)$, $a_t=\frac{2}{t+5}$. We denote by $x^{*}$ the optimal decision under the true parameters. We use the uniform distribution on $\Theta$ as the prior distribution and set the initial solution $x_1=(15,15,15)$. We run all three algorithms (\cref{algorithm: independent}, benchmark, MLE) for 100 times on the problem. The mean and standard deviation of the solution error $||x_t - x^{*}||_2$ over time are shown in \cref{fig:decision_independent_newsvendor}. We have similar observations as the synthetic quadratic problem.

\begin{figure}[!ht]
{
\centering
\includegraphics[width=1\textwidth]{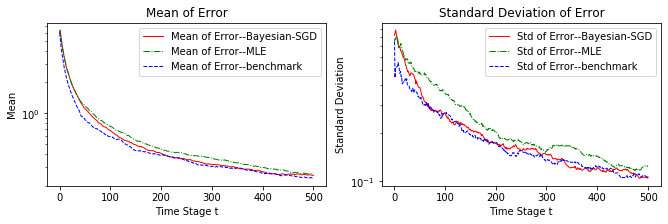}
\caption{Mean and standard deviation of $||x_t - x^{*}||_2$ of 100 runs of \cref{algorithm: independent} (Bayesian-SGD), MLE, and the benchmark algorithm in the multi-item newsvendor problem with decision-independent data.}
\label{fig:decision_independent_newsvendor}
}
\end{figure}


\subsubsection{Decision-dependent uncertainty}
We then consider the multi-item newsvendor problem with the decision-dependent input uncertainty, where the customer demand depends on the order amount $x$ of the inventory. We follow the setting in \cite{balakrishnan2004stack}, in which high inventory stimulates demand. We assume the demand $\xi$ follows a multivariate normal distribution with mean $\theta^c_{\mu} + \alpha x^{\beta}$ and covariance matrix $\theta^c_{\Sigma}$, where $\alpha > 0, 0 < \beta <1$ are vectors and $(\cdot)^{\beta}$ is element-wise operator. Note that the mean function admits diminishing marginal utility, which says that the marginal increase in the mean demand diminishes as the inventory level increases. The gradient estimator in \cref{algorithm: dependent} at each time stage $t$ is given by 
$$\nabla_{x} h\left(x_{t}, \xi_{t}\right)+h\left(x_{t}, \xi_{t}\right) \frac{\sum_{\theta} \pi_t(\theta)\cdot \nabla_x f(\xi_t;x_t,\theta_t)}{\sum_{\theta} \pi_t(\theta) \cdot f(\xi_t;x_t,\theta_t)}.$$
$f(\xi;x,\theta)=\frac{\exp \left(-\frac{1}{2}(\xi-(\theta_{\mu}+\alpha x^{\beta}))^{T} \theta_{\Sigma}^{-1}(\xi-(\theta_{\mu}+\alpha x^{\beta}))\right)}{\sqrt{(2 \pi)^{d}|\theta_{\Sigma}|}}$, $\nabla_x f(\xi;x,\theta)=f(\xi;x,\theta) \theta_{\Sigma}^{-1}(\xi-(\theta_{\mu}+\alpha x^{\beta}))\alpha \beta x^{\beta-1}$. The parameters are as follows: $\theta^c_{\mu}=(10, 15, 20)$, $\theta^c_{\text{var}}=(3,6,9)$, $\theta^c_{\text{corr}}=(0.1,0.3,0.5)$, the true covariance matrix is $((3, 0.42, 1.56), (0.42, 6, 3.67), (1.56, 3.67, 9))$. $\Theta_{\mu}=\{5, 10, 15, 20, 25\}^{3}$, $\Theta_{\text{var}}=\{1,3,6,9,12\}^{3}$, $\Theta_{\text{corr}}=\{0.1,0.2,0.3,0.4,0.5\}^{3}$. $D=2$, $K=1$, $M=(100,100,100)$, $c=(2,4,6)$, $p=(4,6,8)$, $s=(1,2,3)$, $\alpha=1, \beta=0.5$, $a_t=\frac{2}{t+5}$. We denote by $x^{*}$ the optimal decision under the true parameters. We use the uniform distribution on $\Theta$ as the prior distribution and set the initial solution $x_1=(15,15,15)$. We run \cref{algorithm: dependent} and the benchmark algorithm for 100 times on the problem. The mean and standard deviation of the solution error $||x_t - x^{*}||_2$ over time are shown in \cref{fig:decision_dependent_newsvendor}. We have similar observations as the synthetic quadratic problem.

\begin{figure}[!ht]
{
\centering
\includegraphics[width=1\textwidth]{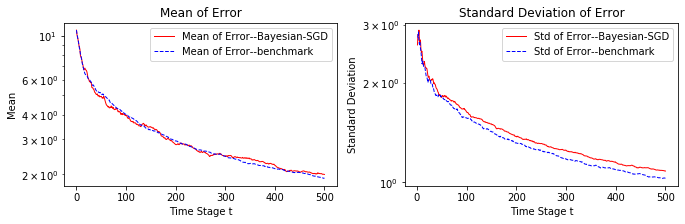}
\caption{Mean and standard deviation of $||x_t - x^{*}||_2$ of 100 runs of \cref{algorithm: dependent} (Bayesian-SGD) and the benchmark algorithm in the multi-item newsvendor problem with decision-dependent data.}
\label{fig:decision_dependent_newsvendor}
}
\end{figure}

As a final note, the good performance of our proposed algorithms on the multi-dimensio-nal newsvendor problem shows promise of the applicability of our proposed approaches to large-scale problems. However, it should be noted that most of the computational time is devoted to the posterior updating, especially for the high-dimensional problem where there is no conjugate prior. It would be interesting to adapt our algorithms to such a high-dimensional setup, where we could leverage the recent theoretical results of Bayesian procedures in high-dimension (cf. \cite{durmus2016high, chopin2015some}).

\section{Conclusions}
\label{sec: conclusions}
In this paper, we propose a Bayesian-SGD approach to stochastic optimization with streaming input data, and present two algorithms for decision-independent and decision-dependent uncertainty respectively. We show the asymptotic convergence of both algorithms, and derive the convergence rate in the decision-independent case based on the non-asymptotic analysis of the Bayesian estimate. Our consistency result of Bayesian posterior distribution with decision-dependent input data could be of independent interest to Bayes estimation. Note that our approach can be viewed as an online extension of the BRO framework \cite{Zhou2015,Wu2018}, and it would be interesting to adapt our approach to other risk functionals (such as Value-at-Risk and Conditional Value-at-Risk) with respect to the unknown distributional parameter.

\appendix
\section{Proof of \cref{lemma: independent_concentration}}
\label{appendix: A}
\begin{proof}
Define the Hellinger distance between $\theta_1$ and $\theta_2$ as
\begin{align*}
    d(\theta_1,\theta_2) =\sqrt{\frac{1}{2}\int_\mathcal{Y} (\sqrt{f(y;\theta_1)}-\sqrt{f(y;\theta_2)})^2}.
\end{align*}
One can easily verify that there exist a constant $A$ such that $\|\theta_1-\theta_2\|\leq Ad(\theta_1,\theta_2)$, where $||\cdot||$ is the Euclidean norm. Let $B_k^t = B(\theta^c, k/\sqrt{Dt})$ be a ball centered at $\theta^c$ with radius $k/\sqrt{Dt}$ under distance $d.$ Since $\Theta$ is finite, we can directly apply Proposition 1 in \cite{birge2015non}. Then for $t\leq T, \epsilon,\delta\in(0,1)$ with probability at least $1-\frac{6\delta}{\pi^2 t^2}$ with respect to $\mathbb{P}_{\theta^c}^{t}$, we have
\begin{align*}
	\pi_t(B_{k(t)}^t) \geq 1-\epsilon,
\end{align*}
where 
\begin{align*}
    k(t) =\inf\left\{j\geq 1\Big| \sum_{i\geq j}|\Theta|e^{-i^2}\leq \frac{6\delta}{\pi^2 t^2}\sqrt{\epsilon \pi_0(\theta^c)}\right\}.
\end{align*}
Note that $\sum_{i\geq j}e^{-i^2}\leq \frac{e}{e-1} e^{-j^2}$, we can set $k(t)$ to be the solution of next equation. 
\begin{align*}
    \frac{e}{e-1}|\Theta| e^{-k(t)^2} =\frac{6\delta}{\pi^2 t^2}\sqrt{\epsilon \pi_0(\theta^c)}.
\end{align*}
By simple calculation, we have $k(t) = \sqrt{\log \frac{e|\Theta|\pi^2 t^2}{6\delta(e-1)\sqrt{\epsilon\pi_0(\theta^c)}}}$. Now we are ready to bound the bias in the gradient estimator.
\begin{align*}
	& \|\mathbb{E}_{\pi_t}\nabla_x H(x, \theta)-\mathbb{E}_{\pi_t}\nabla_x H(x, \theta^c)\|_2^2 \\
	& = \left\|\int (\nabla_x H(x, \theta)- \nabla_x H(x, \theta^c))\pi_t(\theta) d\theta\right\|_2^2\\
	&\leq \int  \left\|(\nabla_x H(x, \theta)- \nabla_x H(x, \theta^c))\right\|_2^2\pi_t(\theta) d\theta\\
	&\leq  \int L_{H}^2||\theta-\theta^c||_2^2 \pi_t(\theta) d\theta\\
	&=  \int_{B_{k(t)}^t} L_{H}^2||\theta-\theta^c||_2^2 \pi_t(\theta) d\theta + \int_{(B_{k(t)}^t)^c} L_{H}^2||\theta-\theta^c||_2^2 \pi_t(\theta) d\theta\\
	&\leq A^2 L_{H}^2 \frac{k(t)^2}{Dt}  \int_{B_{k(t)}^t} \pi_t(\theta) d\theta + L_{H}^2 \max_{\theta\in\Theta}\|\theta-\theta^c\|^2_2\int_{(B_{k(t)}^t)^c}  \pi_t(\theta) d\theta\\
	&\leq  A^2 L_{H}^2 \frac{k(t)^2}{Dt} +  L_{H}^2 \max_{\theta\in\Theta}\|\theta-\theta^c\|^2_2\epsilon.
\end{align*}
Recall that $D$ is the data batch size. Take $\epsilon =\frac{1}{Dt}$, note that $k(t) =\sqrt{\log \frac{e|\Theta|\pi^2 t^2\sqrt{Dt}}{6\delta(e-1)\sqrt{\pi_0(\theta^c)}}}$. We further have
\begin{align*}
	\|\mathbb{E}_{\pi_t}\nabla_x H(x, \theta)-\mathbb{E}_{\pi_t}\nabla_x H(x, \theta^c)\|_2^2
	&\leq  A^2 L_{H}^2 \frac{k(t)^2}{Dt} +  L_{H}^2 \max_{\theta\in\Theta}\|\theta-\theta^c\|^2_2\epsilon\\
	&\leq 2A^2 L_{H}^2\max_{\theta\in\Theta}\|\theta-\theta^c\|^2_2\frac{\log \frac{e|\Theta|\pi^2 t^2\sqrt{Dt}}{6\delta(e-1)\sqrt{\pi_0(\theta^c)} }}{Dt}\\
	&= O(\frac{\log Dt + \log \frac{1}{\delta}}{Dt}).
\end{align*}

Let $\mathcal{E}_t$ denote the event that the above inequality holds, and $\mathcal{E}_t^c$ denote the complement event. Then we have $\mathbb{P}(\mathcal{E}_t^c) \leq \frac{6\delta}{\pi^2 t^2}$. Therefore,
\begin{align*}
    \mathbb{P}(\cap_{t=1}^\infty \mathcal{E}_t) & = 1- \mathbb{P}(\bigcup_{t=1}^{\infty} \mathcal{E}^c_t) \\
    & \geq 1 - \sum_{t=1}^{\infty} \mathbb{P}(\mathcal{E}_t^c) \quad (\text{union bound}) \\
    & \geq 1 - \sum_{t=1}^{\infty} \frac{6\delta}{\pi^2 t^2} \\
    & = 1 - \delta.
\end{align*}
\end{proof}

\section{Proof of \cref{thm: independent_rate}}
\label{appendix: B}
\begin{proof}
By the update \cref{eq: SGD_independent_2}, we know that for any $t\leq T$,
\begin{align*}
    x_{t+1} &= x_{t} - a_{t} \nabla_x H(x_{t},\theta^c) -  a_{t}[\mathbb{E}_{\pi_t}\nabla_x H(x_{t}, \theta)-\nabla_x H(x_{t}, \theta^c)]	\\
    &~~~~~~~~- a_{t}[\nabla_x h(x_{t}, \xi_{t})-\mathbb{E}_{\pi_t}\nabla_x H(x_{t}, \theta)]\\
    & =  x_{t} - a_{t}	\nabla_x H(x_{t},\theta^c) -  a_{t} B_{t} - a_{t} N_{t},
\end{align*}
where $B_{t}$ is the bias and $N_{t}$ is the noise. By \cref{lemma: independent_concentration}, we know $\mathbb{E}[\|B_{t}\|_2^2]\leq  C_1 \frac{\log Dt + \log \frac{1}{\delta}}{Dt}.$ From \cref{ass: independent_rate} we have $\mathbb{E}[\|N_{t}\|_2^2]\leq \sigma^2.$ By the proof of Lemma 2 in \cite{ajalloeian2020analysis}, we know that 
\begin{align*}
    \mathbb{E}[H(x_{t+1},\theta^c)] - H(x_{t},\theta^c)\leq -\frac{a_{t}}{2}\|\nabla_x H(x_{t},\theta^c) \|_2^2 +\frac{a_{t}}{2}C_1 \frac{\log Dt + \log \frac{1}{\delta}}{Dt} +\frac{a_{t}^2}{2}L_{h} \sigma^2,
\end{align*} 
Rearranging the terms in the inequality above, summing over $t$ from 1 to $T$, and noting that $H(x_{t}, \theta^c) \leq \min_{x \in \mathcal{X}} H(x, \theta^c), \forall t$, we have 
{\small
\begin{align*}
    \sum_{t=1}^{T} a_t \mathbb{E}[\|\nabla_x H(x_{t},\theta^c) \|_2^2]\leq 2
    (H(x_1,\theta^c) - \min_{x \in \mathcal{X}} H(x,\theta^c))
    + C_1 \sum_{t=1}^{T} a_t \frac{\log Dt + \log \frac{1}{\delta}}{Dt} + L_{h}\sigma^2 \sum_{t=1}^{T} a_t^2,
\end{align*}
}
Dividing both sides of the above inequality by $\sum_{t=1}^{T} a_t$, and noting that 
\begin{align*}
    \mathbb{E}[\|\nabla_x H(z_T,\theta^c) \|_2^2] = \frac{1}{\sum_{t=1}^{T} a_t} \sum_{t=1}^{T} a_t \mathbb{E}[\|\nabla_x H(x_{t},\theta^c) \|_2^2],
\end{align*}
we have
{\small
\begin{align*}
    \mathbb{E}[\|\nabla_x H(z_T,\theta^c) \|_2^2] \leq \frac{1}{\sum_{t=1}^{T} a_t}\left[2 (H(x_1,\theta^c) - \min_{x \in \mathcal{X}} H(x,\theta^c)) +C_1 \sum_{t=1}^{T} a_t \frac{\log Dt + \log \frac{1}{\delta}}{Dt} + L_{h} \sigma^2\sum_{t=1}^{T}a_t^2.
    \right]
\end{align*}
}
(i) $a_t=\frac{a}{\sqrt{T}}$, $\forall t \leq T$, for some constant $a<\frac{\sqrt{T}}{L_{h}}$. Note that $\sum_{t=1}^{T} \frac{1}{t} \leq \log T+1$ and $\sum_{t=1}^{T} \frac{\log t}{t} \leq \log (\log T+1)$.
Then
{\small
\begin{align*}
    & \mathbb{E}[\|\nabla_x H(z_T,\theta^c) \|_2^2]\\
    &\leq \frac{2(H(x_1,\theta^c)- \min_{x}H(x,\theta^c))}{a\sqrt{T}} + \frac{C_1 (\log D - \log \delta) (\log T + 1)}{L_{h} D T} + \frac{C_1 \log T (\log T + 1)}{L_{h} D T} \\ 
    & = \frac{2(H(x_1,\theta^c)- \min_{x}H(x,\theta^c))}{a\sqrt{T}} + \frac{C_1 (\log D - \log \delta)}{L_{h} D T} + \frac{C_1 (\log D - \log \delta) \log T}{L_{h} D T} + \frac{C_1 \log^2 T}{L_{h} D T} + \frac{L_{h} a\sigma^2}{\sqrt{T}}
\end{align*}
}
(ii) $a_t = \frac{a}{t}$, $\forall t \leq T$, for some constant $a < \frac{1}{L_{h}}$. Let $M_T=\sum_{t=1}^{T}\frac{1}{t}$. Note that 
\begin{align*}
    \sum_{t=1}^{T}\frac{\log t}{t^2} < \sum_{t=1}^{\infty}\frac{\log t}{t^2} = \frac{\pi^2}{6}(12 \ln A - \gamma - \ln 2\pi) < 1,
\end{align*}
where $A \approx 1.28$ is the Glaisher-Kinkelin constant and $\gamma \approx 0.58$ is the Euler-Mascheroni constant. Then we have
\begin{align*}
	& \mathbb{E}[\|\nabla_x H(z_T,\theta^c) \|_2^2]\\
	&\leq \frac{2(H(x_1,\theta^c)- \min_{x\in\mathcal{X}}H(x,\theta^c))}{a M_T} +\frac{C_1}{M_T}\sum_{t=1}^T \frac{\log Dt + \log \frac{1}{\delta}}{Dt^2} +\sum_{t=1}^T \frac{L_{h} a \sigma^2}{M_T t^2}\\
	& \leq \left[\frac{2(H(x_1,\theta^c)- \min_{x\in\mathcal{X}}H(x,\theta^c))}{a} +\frac{6C_1 + \pi^2C_1 (\log D - \log \delta)}{6D}+\frac{\pi^2 L_{h} a \sigma^2}{6}\right]\frac{1}{\log T}.
\end{align*}
(iii) $a_t = \frac{a}{\sqrt{t}}$, $\forall t \leq T$, for some constant $a < \frac{1}{L_{h}}$. Let $Q_t=\sum_{t=1}^{T}\frac{1}{\sqrt{t}}$. Note that $\sum_{t=1}^{\infty}\frac{1}{t\sqrt{t}}=\zeta(1.5)\approx2.61<3$, $\sum_{t=1}^{\infty} \frac{\log t}{t\sqrt{t}} < 4$, $\sum_{t=1}^{T}\frac{1}{\sqrt{t}} \geq \sqrt{T}$, where $\zeta(\cdot)$ is the Riemann’s zeta function. Then we have
{\small
\begin{align*}
	& \mathbb{E}[\|\nabla_x H(z_T,\theta^c) \|_2^2]\\
	&\leq \frac{2(H(x_1,\theta^c)- \min_{x}H(x,\theta^c))}{a Q_T} +\frac{C_1(\log D - \log \delta)}{D Q_T}\sum_{t=1}^T \frac{1}{t\sqrt{t}} + \frac{C_1}{D Q_T}\sum_{t=1}^{T}\frac{\log t}{t \sqrt{t}} +  \frac{L_{h} a \sigma^2}{Q_T} \sum_{t=1}^T \frac{1}{t}\\
	&\leq \left[\frac{2(H(x_1,\theta^c)- \min_{x}H(x,\theta^c))}{a\sqrt{T}} + \frac{3C_1(\log D - \log \delta) + 4C_1}{D\sqrt{T}} + \frac{L_{h} a \sigma^2}{\sqrt{T}}\right] + \frac{L_{h} a \sigma^2 \log T}{\sqrt{T}}.
\end{align*}
}
\end{proof}

\section{Proof of \cref{lemma: consistency}}
\label{appendix: D}
\begin{proof}
Define $w_t = -\log \pi_t(\theta^c).$ One can easily verify that $w_t\geq 0$. Then we have 
\begin{align*}
	\mathbb{E}[w_{t+1}] &= \mathbb{E}\left[\mathbb{E}[w_{t+1}|\mathcal{F}_t,x_{t+1}]\right]\\
	&=  \mathbb{E}\left[\mathbb{E}\left[-\log \frac{\pi_{t}(\theta^c) f(y_{t+1};x_{t+1},\theta^c)}{\sum_\theta \pi_{t}(\theta) f(y_{t+1};x_{t+1},\theta)}|\mathcal{F}_t,x_{t+1}\right]\right]\\
	&= \mathbb{E}\left[-\log \pi_{t}(\theta^c)-\mathbb{E}\left[\log \frac{f(y_{t+1};x_{t+1},\theta^c)}{\sum_\theta \pi_{t}(\theta) f(y_{t+1};x_{t+1},\theta)}|\mathcal{F}_t,x_{t+1}\right]\right]\\
	&= \mathbb{E}[w_t]-\mathbb{E}[D_{KL}(f^*(\cdot;x_{t+1})||\hat f_t(\cdot;x_{t+1}))].
\end{align*}
This implies that $\mathbb{E}[d_t]=\mathbb{E}[w_t]-\mathbb{E}[w_{t+1}].$ For any $T>0$, we have
$$\sum_{t=0}^T \mathbb{E}[d_t] =\sum_{t=0}^T\mathbb{E}[w_t]-\mathbb{E}[w_{t+1}] =w_0-\mathbb{E}[w_{T+1}]\leq w_0<\infty.$$
Then we have $\sum_{t=0}^\infty \mathbb{E}[d_t]\leq w_0.$ $\forall \epsilon>0,$ we have
$$\sum_{t=0}^\infty \mathbb{P}(d_t\geq \epsilon)\leq \frac{1}{\epsilon}\sum_{t=0}^\infty\mathbb{E}[d_t]<\infty.$$
By Borel-Cantelli Lemma, we know that $\mathbb{P}(d_t\geq\epsilon, i.o.)=0,$ where $i.o.$ stands for infinitely often. It then implies
$\lim_{t\rightarrow\infty} d_t =0, ~\text{w.p.1} (\mathbb{P}^{\infty}_{\theta^c})$. Moreover, since $d_t\geq 0,$ by Tonelli's Theorem, we have
$$\mathbb{E}\left[\sum_{t=0}^\infty d_t\right] =\sum_{t=0}^\infty \mathbb{E}[d_t]\leq w_0.$$
Since $\sum_{t=0}^\infty d_t$ has bounded expectation, it must be finite w.p.1 $(\mathbb{P}^{\infty}_{\theta^c})$.
\end{proof}

\section{Proof of \cref{lemma: pi_consistency}}
\label{appendix: E}
\begin{proof}
Without loss of generality, we assume that $\theta^c=\theta_1.$ Recall that  $f^*(\xi;x_{t+1}) =f^*(\xi;x_{t+1},\theta_1)$ and $\hat f_t(\xi;x_{t+1})=\sum_i \pi_t(\theta_i)f(\xi;x_{t+1},\theta_i).$ Then we have 
\begin{equation}\label{eqn:measurement distance}
    f^*(\xi;x_{t+1})-\hat f_t(\xi;x_{t+1}) = (1-\pi_t(\theta_1))f(\xi;x_{t+1},\theta_1)-\sum_{i>1} \pi_t(\theta_i)f(\xi;x_{t+1},\theta_i).
\end{equation}
Note that for any $t > 0$, $(\pi_{t}(\theta_1), \pi_{t}(\theta_2), \cdots)$ is infinitely dimensional bounded vector with all components in the interval $[0,1]$ and sum up to 1 (normalized), we can take a subsequence $\{\pi_{t_k}\}$ such that for each component $j$, $\pi_{t_k}(\theta_j)$ converges to a limit which is denoted by $\pi_{\infty}(\theta_j)$, which is also known as weak convergence (of a deterministic sequence). Next, we will show that $\pi_{\infty}(\theta)$ is a normalized vector. For any $j \in \mathbb{N}$, $\lim_{t_k \to \infty} \pi_{t_k}(\theta_j) = \pi_{\infty}(\theta_j)$, which is equivalent to
\begin{equation*}
    \forall \epsilon_j > 0, \exists N \in \mathbb{N}, s.t. \forall n \geq N, |\pi_{\infty}(\theta_j) - \pi_{n}(\theta_j)| \leq \epsilon.
\end{equation*}
Therefore, we have
\begin{equation}\label{eqn: posterior_sum}
    - \epsilon_j < \pi_{\infty}(\theta_j) - \pi_n(\theta_j) < \epsilon_j, j=1,2,\cdots
\end{equation}
According to the Bayesian update rule, we know $\sum_{j=1}^{\infty}\pi_{n}(\theta_j) = 1$. It then follows that $\forall \epsilon > 0$, take $\epsilon_j = \frac{\epsilon}{2^{j}}$ and sum over \eqref{eqn: posterior_sum} for all $j \in \mathbb{N}$, we get
\begin{equation*}
    -(\frac{\epsilon}{2^{1}} + \frac{\epsilon}{2^{2}} + \cdots) <  \sum_{j=1}^{\infty} \pi_{\infty}(\theta_j) - 1 < (\frac{\epsilon}{2^{1}} + \frac{\epsilon}{2^{2}} + \cdots),
\end{equation*}
which indicates $\forall \epsilon > 0$, $|\sum_{j=1}^{\infty} \pi_{\infty}(\theta_j)-1| < \epsilon$, and it implies that $\sum_{j=1}^{\infty} \pi_{\infty}(\theta_j) = 1$. So the limit is also a valid probability simplex. Since every weakly convergent sequence in $L^{1}$ is strongly convergent (cf. Chapter 2 in \cite{pedersen2012analysis}), we can take any convergent subsequence of $\{\pi_{t_k}\}$ with limit $(p_1^{*}, p_2^{*}, \cdots)$. Since $\mathcal{X}$ is also bounded, from this subsequence, we could take a further subsequence $\{\pi_{\tau_k}\}$ with time stage $\tau_1,\tau_2,\cdots$, such that $\{x_{\tau_k}\}$ converges to some $x^{\prime}$. Then take limit over \eqref{eqn:measurement distance} along $\tau_1, \tau_2, \cdots$, we have 
$$f^*(\xi;x_{{\tau_k}})-\hat f_{\tau_k}(\xi;x_{{\tau_k}}) \rightarrow (1-(p_1^*))f(\xi;x',\theta_1)-\sum_{i>1} p^*_i f(\xi;x',\theta_i).$$
Moreover, since K-L divergence dominates total variation distance between two distributions, we have
\begin{align}\label{eq:pi_consistency}
	\int_\Xi \left|f^*(\xi;x_{t+1})-\hat f_t(\xi;x_{t+1})\right|d\xi\leq d_t.
\end{align}
From \cref{eq:pi_consistency} and \cref{lemma: consistency}, we know that $\int_\Xi \left|f^*(\xi;x_{t+1})-\hat f_t(\xi;x_{t+1})\right|d\xi\rightarrow 0$ w.p.1 ($\mathbb{P}_{\theta^c}^{\infty}$). By DCT, we have 
$$\int_\Xi\left|(1-(p_1^*))f(\xi;x^{\prime},\theta_1)-\sum_{i>1} p^*_i f(\xi;x^{\prime},\theta_i)\right|d\xi = 0,$$ which implies: $$(1-(p_1^*))f(\xi;x^{\prime},\theta_1)-\sum_{i>1} p^*_i f(\xi;x^{\prime},\theta_i)=0, \forall \xi.$$
By linear independence, we know $p^*_1= 1,p^*_2=p^*_3=...=0.$ Since every convergent subsequence of $\{(\pi_t(\theta_1),\pi_t(\theta_2),\cdots)\}_t$ has the same limit, we have $\pi_t \Rightarrow \delta_{\theta^c}$ w.p.1 ($\mathbb{P}_{\theta^c}^{\infty}$).
\end{proof}

\section{Derivation of unbiased estimator in decision-dependent case}
\label{appendix: C}
\begin{align*}
    \nabla_x \mathbb{E}_{\pi_t}[H(x,\theta)] & = \mathbb{E}_{\pi_t}\left[\nabla_x \mathbb{E}_{f(\cdot;x,\theta)}[h(x,\xi)]\right] \\
    & = \mathbb{E}_{\pi_t} \left[\int_{\Xi} \nabla_x h(x,\xi)f(\xi;x,\theta)d\xi\right] +  \mathbb{E}_{\pi_t} \left[\int_{\Xi}h(x,\xi)\nabla_xf(\xi;x,\theta)d\xi\right] \\ 
    & = \mathbb{E}_{\pi_t} \left[\mathbb{E}_{f(\cdot;x,\theta)}[\nabla_x h(x,\xi)]\right] + \int_{\Theta}\left( \int_{\Xi} h(x,\xi) \nabla_x f(\xi;x,\theta)d\xi\right) \pi_t(\theta) d\theta\\
    & = \mathbb{E}_{\pi_t} \left[\mathbb{E}_{f(\cdot;x,\theta)}[\nabla_x h(x,\xi)]\right] + \int_{\Xi} h(x,\xi) \left( \int_{\Theta} \pi_t(\theta) \nabla_x f(\xi;x,\theta)d\theta \right) d\xi\\
    & = \mathbb{E}_{\pi_t} \left[\mathbb{E}_{f(\cdot;x,\theta)}[\nabla_x h(x,\xi)]\right] + \int_{\Xi} h(x,\xi) \nabla_x \hat{f}(\xi;x) d\xi \\
    & = \mathbb{E}_{\pi_t} \left[\mathbb{E}_{f(\cdot;x,\theta)}[\nabla_x h(x,\xi)]\right] + \int_{\Xi} h(x,\xi)\frac{\nabla_x \hat{f}(\cdot;x)}{\hat{f}(\cdot;x)} \hat{f}(\cdot;x)d\xi \\
    & = \mathbb{E}_{\pi_t} \left[\mathbb{E}_{f(\cdot;x,\theta)}[\nabla_x h(x,\xi)]\right] + \mathbb{E}_{\hat{f}(\cdot;x)}\left[h(x,\xi)\frac{\nabla_x \hat{f}(\cdot;x)}{\hat{f}(\cdot;x)}\right] \\
    & = \mathbb{E}_{\pi_t} \left[\mathbb{E}_{f(\cdot;x,\theta)}[\nabla_x h(x,\xi) + h(x,\xi) \frac{\nabla_x \hat{f}(\cdot;x)}{\hat{f}(\cdot;x)}]\right].
\end{align*}
From \cref{ass: independent_DCT} and \cref{ass: dependent_DCT}, we know that both the objective function $h(x,\xi)$ and the density function $f(\xi;x,\theta)$ are $C^{1}$-smooth. The Lipschitz continuous gradient implies both $h(x,\xi)$ and $f(\xi;x,\theta)$ are integrable functions; $\nabla_x h(x,\xi)$ and $\nabla_x f(x,\xi)$ are dominated by some integrable functions. Using the chain rule, we have $\nabla_x h(x,\xi) f(\xi;x,\theta)=\nabla_x h(x,\xi) \cdot f(\xi;x,\theta)+h(x,\xi) \cdot \nabla_x f(\xi;x,\theta)$, and thus $\nabla_x h(x,\xi) f(\xi;x,\theta)$ is dominated by some integrable function. The second equality holds as the interchange between expectation and differentiation is justified by DCT. The first equality is also justified by DCT in a similar manner. Also note that since $h(x,\xi)\nabla_x f(\xi;x,\theta)$ is dominated by some integrable function, it is also absolutely integrable, hence the fourth equality is justified by Fubini-Tonelli theorem. 

\section{Proof of \cref{proposition: bias vanish}}
\label{appendix: F}
\begin{proof}
We bound $|\beta_{t,1}|$ as follows.
\begin{align*}
	& |\beta_{t,1}|  = | \mathbb{E}_{\hat{f_t}(\cdot;x_t)}\nabla_x h(x_t,\xi) - \mathbb{E}_{f^*(\cdot;x_t)}\nabla_x h(x_t,\xi)|\\
	&\leq \max_{x, \xi}|\nabla_x h(x,\xi)| \int_\Xi \left|f^*(\xi;x_{t})-\hat f_t(\xi;x_{t})\right| d \xi \\
	&\leq L_{h}^{\prime} \int_\Xi |f^*(\xi;x_{t})-f^*(\xi;x_{t+1})| + |f^*(\xi;x_{t+1})-\hat f_t(\xi;x_{t+1})| + |\hat f_t(\xi;x_{t})-\hat f_t(\xi;x_{t+1})| d\xi.
\end{align*}
From \cref{ass: independent_DCT} we know $h(x,\xi)$ is continuously differentiable, which implies it has bounded gradient, such that $|\nabla_x h(x,\xi)| \leq L_h^{\prime}$ for some $L_h^{\prime} > 0$. From \cref{ass: dependent_DCT}, we know $f(\xi;x,\theta)$ is continuously differentiable, which implies it has bounded gradient, such that $|\nabla_x f(\xi;x,\theta)| \leq L_f^{\prime}$ for some $L_f^{\prime} > 0$. Therefore, for every $\xi \in \Xi$,
\begin{align}\label{appendix: proposition proof 1}
    |f^*(\xi;x_{t})-f^*(\xi;x_{t+1})| \leq L_f' |x_t-x_{t+1}| \leq L_f' D_f a_t,
\end{align}
\begin{align}\label{appendix: proposition proof 2}
    |\hat f_t(\xi;x_{t})-\hat f_t(\xi;x_{t+1})| \leq L_f' D_f a_t,
\end{align}
for some $D_f > 0$. Let $q^{*}_t (\xi)=f^{*}(\xi;x_t)-f^{*}(\xi;x_{t+1})$. Since $\lim_{t \to \infty} a_t=0$, and by \eqref{appendix: proposition proof 1}, we have $\lim_{t \to \infty} |q^{*}_t(\xi)|=0$ for every $\xi \in \Xi$. By absolute value theorem, we have $\lim_{t \to \infty} q^{*}_t(\xi)=0$ for every $\xi \in \Xi$, that is, $q^{*}_t$ converges pointwise to $0$. By dominated convergence theorem, we have 
\begin{align}\label{appendix: proposition proof 3}
    \lim_{t \to \infty} \int_\Xi |f^*(\xi;x_{t})-f^*(\xi;x_{t+1})|=0.
\end{align}
Similarly, let $\hat{q}_t (\xi)=\hat{f}_t(\xi;x_t)-\hat{f}_t(\xi;x_{t+1})$. From \cref{ass: dependent}, we have $\lim_{t \to \infty} a_t=0$, and by \eqref{appendix: proposition proof 2}, we have $\lim_{t \to \infty} |\hat{q}_t(\xi)|=0$ for every $\xi \in \Xi$. By absolute value theorem, we have $\lim_{t \to \infty} \hat{q}_t(\xi)=0$ for every $\xi \in \Xi$, that is, $\hat{q}_t$ converges pointwise to $0$. By dominated convergence theorem, we have 
\begin{align}\label{appendix: proposition proof 4}
    \lim_{t \to \infty} \int_\Xi |\hat{f}_t(\xi;x_{t})-\hat{f}_t(\xi;x_{t+1})|=0.
\end{align}
Moreover, since K-L divergence dominates total variation distance between two distributions, we have
\begin{align}
	\int_\Xi |f^*(\xi;x_{t+1})-\hat f_t(\xi;x_{t+1})| d\xi\leq d_t.
\label{appendix: proposition proof 5}
\end{align}
From \cref{lemma: consistency}, we have $\lim_{t \to \infty} d_t=0$ w.p.1 ($\mathbb{P}_{\theta^c}^{\infty}$). Combining \eqref{appendix: proposition proof 3},  \eqref{appendix: proposition proof 4}, and \eqref{appendix: proposition proof 5} together, we know that $
\lim_{t \to \infty}	|\beta_{t,1}| =0$ w.p.1 $(\mathbb{P}_{\theta^c}^{\infty})$. 
\end{proof}

\section{Proof of \cref{proposition: consistency}}
\label{appendix: G}
\begin{proof}
We bound $|\beta_{t,2}|$ as follows. From \cref{ass: independent_DCT} we know $h(x,\xi)$ is continuously differentiable, which implies it is an integrable function of $\xi$ for every $x \in \mathcal{X}$. Thus, $\int_{\Xi} h(x,\xi) d\xi = U_h$ for some $-\infty < U_h < \infty$. From \cref{ass: dependent_DCT} we know $f(\xi;x,\theta)$ is continuously differentiable, which implies it has bounded gradient, such that $|\nabla_x f(\xi;x,\theta)| \leq L_f'$ for some $L_f' > 0$. 
\begin{align*}
	|\beta_{t,2}|&= \left| \int_\Xi h(x_t,\xi)\nabla_x\widehat{f}_{t}\left(\xi; x_{t}\right) d \xi - \int_\Xi h(x_t,\xi)\nabla_x f^*(\xi;x_t) d \xi\right|\\
	&=\left| \int_\Xi h(x_t,\xi)\left(\nabla_x\widehat{f}_{t}\left(\xi; x_{t}\right) -\nabla_x f^*(\xi;x_t)\right )d \xi\right|\\
	&= \left| \int_\Xi h(x_t,\xi)\left(\sum_{\theta\in \Theta}(\pi_t(\theta)-\delta_{\theta^c}(\theta))\nabla_x f(\xi;x_t, \theta)\right )d \xi\right|\\
	&\leq |U_h| \cdot L_{f}' \left|\sum_{\theta\in \Theta}\pi_t(\theta)-\delta_{\theta^c}(\theta)\right|\rightarrow 0,
\end{align*}
w.p.1 ($\mathbb{P}_{\theta^c}^{\infty}$) as $t\rightarrow \infty$, using the consistency of $\pi_t(\theta)$ from \cref{lemma: pi_consistency}.
\end{proof}


\bibliographystyle{siamplain}
\bibliography{references}
\end{document}